\documentclass[11pt,twoside,a4paper]{article}

\usepackage{amssymb}
\usepackage{amsmath}
\usepackage{amsthm}

\allowdisplaybreaks

\pagestyle{myheadings}\markboth{Xuan Thinh Duong, Ji Li, Brett D. Wick and Dongyong Yang}
{Hardy space via factorization, BMO space via  commutators in the Bessel setting}

\textwidth =158mm
\textheight =225mm
\oddsidemargin 2mm
\evensidemargin 2mm
\headheight=13pt
\setlength{\topmargin}{-0.6cm}

\parindent=13pt

\def\rr{{\mathbb R}}
\def\rn{{{\rr}^n}}

\def\nn{{\mathbb N}}

\def\fz{\infty}
\def\az{\alpha}
\def\supp{{\mathop\mathrm{\,supp\,}}}

\def\lz{\lambda}

\def\ez{\epsilon}

\def\bz{\beta}

\def\wz{\widetilde}

\def\ls{\lesssim}
\def\gs{\gtrsim}

\def\tbz{{\triangle_\lz}}
\def\dmz{{dm_\lz}}

\def\riz{{R_{\Delta_\lz}}}

\def\rrp{{{\mathbb\rr}_+}}
\def\bmoz{{{\rm BMO}(\mathbb\rrp,\, dm_\lz)}}

\def\inzf{{\int_0^\fz}}
\def\szlzm{{(\sin\theta)^{2\lz-1}}}
\def\szlzp{{(\sin\theta)^{2\lz+1}}}
\def\szlzmx{{(\sin\theta)^{2\lz-1}\,d\theta}}

\def\wriz{{\wz\riz}}
\def\xtz{{x^{2\lz}\,dx}}
\def\mxzr{{m_\lz(I(x_0,\,r))}}
\def\myzr{{m_\lz(I(y_0,\,r))}}
\def\mxzxy{{m_\lz(I(x_0, |x_0-y_0|))}}

\def\lrz{{L^r(\rrp,\, dm_\lz)}}

\def\loz{{L^1(\rrp,\, dm_\lz)}}
\def\lpz{{L^p(\rrp,\, dm_\lz)}}
\def\lppz{{L^{p'}(\rrp,\, dm_\lz)}}
\def\linz{{L^\fz(\rrp,\, dm_\lz)}}
\def\hoz{{H^1({\mathbb R}_+,\, dm_\lz)}}

\def\dint{\displaystyle\int}

\def\dfrac{\displaystyle\frac}

\def\r{\right}
\def\lf{\left}

\newtheorem{thm}{Theorem}[section]
\newtheorem{lem}[thm]{Lemma}%[section]
\newtheorem{prop}[thm]{Proposition}%[section]
\newtheorem{rem}[thm]{Remark}%[section]
\newtheorem{cor}[thm]{Corollary}%[section]
\newtheorem{defn}[thm]{Definition}%[section]
%[section]

\numberwithin{equation}{section}

\begin{document}

\arraycolsep=1pt

\title{\Large\bf  Factorization for Hardy spaces and characterization for BMO spaces via commutators
in the Bessel setting}
\author{Xuan Thinh Duong, Ji Li, Brett D. Wick, Dongyong Yang}

%\medskip
\date{}
\maketitle

\begin{center}
\begin{minipage}{13.5cm}\small

{\noindent  {\bf Abstract:}\  Fix $\lambda>0$. Consider  the Hardy space $H^1(\mathbb{R}_+,dm_\lambda)$ in the sense of Coifman and Weiss,
where $\mathbb{R_+}:=(0,\infty)$ and $dm_\lambda:=x^{2\lambda}dx$ with $dx$ the Lebesgue measure.
Also consider
the Bessel operators
$\triangle_\lambda:=-\frac{d^2}{dx^2}-\frac{2\lambda}{x}
\frac d{dx}$, and $S_\lambda:=-\frac{d^2}{dx^2}+\frac{\lambda^2-\lambda}{x^2}$ on $\mathbb{R_+}$.
The Hardy spaces $H^1_{\Delta_\lambda}$ and  $H^1_{S_\lambda}$ associated with $\Delta_\lambda$
and $S_\lambda$  are defined via the Riesz transforms $R_{\Delta_\lambda}:=\partial_x (\Delta_\lambda)^{-1/2}$
and $R_{S_\lambda}:= x^\lambda\partial_x x^{-\lambda} (S_\lambda)^{-1/2}$, respectively. It is
known that $H^1_{\Delta_\lambda}$ and $H^1(\mathbb{R}_+,dm_\lambda)$ coincide but they are different
from $H^1_{S_\lambda}$. In this article, we prove the following:
(a) a weak factorization of $H^1(\mathbb{R}_+,dm_\lambda)$ by using a bilinear form of the
Riesz transform $R_{\Delta_\lambda}$, which implies the characterization of the BMO space
associated to $\Delta_\lambda$ via the commutators related to $R_{\Delta_\lambda}$; (b) the BMO space associated to $S_\lambda$
can not be characterized by commutators related to $R_{S_\lambda}$, which implies that
$H^1_{S_\lambda}$ does not have a weak factorization via a bilinear form of the Riesz transform $R_{S_\lambda}$.

}

\end{minipage}
\end{center}

%\bigskip
%\tableofcontents
%
%
%\bigskip
\bigskip

{ {\it Keywords}: BMO; commutator; Hardy space; factorization; Bessel operator; Riesz transform.}

\medskip

{{Mathematics Subject Classification 2010:} {42B30, 42B20, 42B35}}

\section{Introduction and statement of main results\label{s1}}

\noindent Recall that the classical Hardy space $H^p$, $0<p<\infty$, on the unit disc
$\mathbb{D}=\{ z\in\mathbb{C}:\ |z|<1\}$ is defined as the space of holomorphic functions $f=u+iv$, i.e., those
satisfying the Cauchy--Riemann equations
$ {\partial u\over \partial x} ={\partial v\over \partial y}\ \ {\rm and\ \ }
 {\partial u\over \partial y} =-{\partial v\over \partial x}\ \ {\rm in\ \ }\mathbb{D},   $
such that
$$  \|f\|_{H^p(\mathbb{D})}:=\sup_{0\leq r<1} \Big( {1\over 2\pi}\int_0^{2\pi}  \big|f(re^{it})\big|^p dt \Big)^{1\over p}<\infty.  $$
It is well known that the product of two
 $H^2(\mathbb{D})$ functions belongs to Hardy space $H^1(\mathbb{D})$, but in fact the converse is also true, and is known as
 {\it  the Riesz factorization theorem}:
 {\it ``A function $f$ is in $H^1(\mathbb{D})$ if and only if there exist $g,h\in H^2(\mathbb{D})$ with
 $f= g\cdot h$ and $\|f\|_{H^1(\mathbb{D})} =\|g\|_{H^2(\mathbb{D})}\|h\|_{H^2(\mathbb{D})}$.''
  }
 This factorization result plays an important role in studying function theory and operator
 theory connected to the spaces $H^1(\mathbb{D})$, $H^2(\mathbb{D})$ and the space $BMOA(\mathbb{D})$ (analytic BMO).
 \smallskip

The real-variable Hardy space theory on
$n$-dimensional Euclidean space $\rn$ ($n\geq1$) plays an important
role in harmonic analysis and has been systematically developed. We point out
that two closely related characterizations for $H^1(\mathbb{R}^n)$ are that:
(1) $H^1(\mathbb{R}^n)$ can be characterized in terms of Riesz transforms;  (2)  $H^1(\mathbb{R}^n)$
 can be viewed as the boundary of the Hardy space $H^1(\mathbb{R}^{n+1}_+)$ consisting of
 systems of conjugate harmonic functions $F=(u_0,u_1,\ldots,u_n)$, which
 satisfy the generalized Cauchy--Riemann equations $ \sum_{j=0}^n {\partial u_j\over \partial x_j} =0\ \ {\rm and\ \ }
 {\partial u_j\over \partial x_k} ={\partial u_k\over \partial x_j}\ \ {\rm in\ \ }\mathbb{R}_+^{n+1}$, $0\leq j,k\leq n$,
see \cite{fs72,sw60}. However, the analogue of the Riesz factorization theorem, sometimes referred to as strong factorization,
is not true for real-variable Hardy space $H^1(\mathbb{R}^n)$.  Nevertheless,
Coifman, Rochberg and Weiss \cite{crw} provided a suitable replacement that works in studying function
theory and operator theory of $H^1(\mathbb{R}^n)$, the weak factorization via a bilinear form related to the
Riesz transform (Hilbert transform in dimension 1).

The theory of the classical Hardy space is intimately connected to the Laplacian; changing
the differential operator introduces new challenges and directions to explore.
In 1965, Muckenhoupt and Stein in \cite{ms} introduced a notion of conjugacy associated with this Bessel operator $\tbz$,
which is defined by
\begin{equation*}%\label{bessel 1}
\tbz f(x):=-\frac{d^2}{dx^2}f(x)-\frac{2\lz}{x}\frac{d}{dx}f(x),\quad x>0.
\end{equation*}
They developed a theory in the setting of
$\tbz$ which parallels the classical one associated to $\triangle$.  Results on $\lpz$-boundedness of conjugate
functions and fractional integrals associated with $\tbz$ were
obtained, where $p\in[1, \fz)$, $\rr_+:=(0, \fz)$ and $\dmz(x):= x^{2\lz}\,dx$.
Since then, many problems based on the Bessel context were studied;
see, for example, \cite{ak,bcfr,bfbmt,bfs,bhnv,k78,v08,yy}. In particular,
the properties and $L^p$ boundedness $(1<p<\infty)$ of Riesz transforms
\begin{equation*}%\label{riesz}
R_{\Delta_\lambda}f := \partial_x (\Delta_\lz)^{-1/2}f
\end{equation*}
related to $\Delta_\lz$ have been studied in \cite{ak,bcfr,bfbmt,ms,v08}.
The related Hardy space
$$H^1(\mathbb{R}_+, \dmz):=\{ f\in L^1(\mathbb{R}_+, \dmz):\  R_{\Delta_\lambda}f\in L^1(\mathbb{R}_+, \dmz)\}$$
with norm $\|f\|_{H^1(\mathbb{R}_+,\,\dmz)}:=\|f\|_{L^1(\mathbb{R}_+,\,\dmz)}+\|R_{\Delta_\lambda}f\|_{L^1(\mathbb{R}_+,\,\dmz)}$ has been studied
by Betancor et al. in \cite{bdt}. For $f\in H^1(\mathbb{R}_+, \dmz)$, we have
that the pair of functions
$$ u(t,x):=P_t^{[\lambda]}(f)(x) \quad {\rm and}\quad  v(t,x):=Q_t^{[\lambda]}(f)(x), \quad t,x\in \mathbb{R}_+  $$
satisfy the following generalized Cauchy--Riemann equations
$$ {\partial u\over \partial x} =-{\partial v\over \partial t}\ \ {\rm and\ \ }
 {\partial u\over \partial t} ={\partial v\over \partial x} - {2\lambda\over x} v\ \ {\rm in\ \ }\mathbb{R}_+. $$
Here $P_t^{[\lambda]}(f)$ is the Poisson integral of $f$ with the Poisson semigroup $P_t^{[\lambda]}:=e^{-\sqrt{\Delta_\lambda}}$, and
$Q_t^{[\lambda]}(f)$ is the conjugate Poisson integral, see Section 2 for precise definitions.

\smallskip
Following a different procedure in \cite{ms},  the
Riesz transform
\begin{equation*}%\label{riesz}
R_{S_\lz}f := A_\lambda (S_\lz)^{-1/2}f, \quad  {\rm where}\ A_\lambda:= x^\lambda \partial_x x^{-\lambda}
\end{equation*}
has also been studied by Betancor et al. \cite{bdt}, which is related to the other Bessel operator $S_\lambda$
defined by
\begin{equation}\label{bessel 2}
S_\lambda f(x):=-\frac{d^2}{dx^2}f(x)+ \frac{\lz^2-\lz}{x^2}f(x),\quad x>0.
\end{equation}
Moreover, the corresponding Hardy space
$$H^1_{S_\lambda}(\mathbb{R}_+,\,dx):=\big\{ f\in L^1(\mathbb{R}_+,\,dx): R_{S_\lz}(f)\in L^1(\mathbb{R}_+,\,dx) \big\}$$
with norm $\|f\|_{H^1_{S_\lambda}(\mathbb{R}_+,\,dx)}= \|f\|_{L^1(\mathbb{R}_+,\,dx)} + \|R_{S_\lambda}f\|_{L^1(\mathbb{R}_+,\,dx)}$
was characterized. And the Poisson integral and the conjugate Poisson integral of the function $f\in H^1_{S_\lambda}(\mathbb{R}_+,\,dx)$ also satisfy a generalized
Cauchy--Riemann equations.

\medskip
A natural question is that: Do the Hardy spaces $H^1(\mathbb{R}_+, \dmz)$ and $H^1_{S_\lambda}(\mathbb{R}_+,\,dx)$ have Riesz type factorization or weak factorization in terms of a bilinear form related to $R_{\Delta_\lambda}$ and $R_{S_\lambda}$, respectively?

\medskip

The aim of this paper is twofold. We first build up a weak factorization for the Hardy space $H^1(\mathbb{R}_+, \dmz)$ in terms of a bilinear form related to $R_{\Delta_\lambda}$. Then we further prove that this weak factorization implies the characterization of the dual of $H^1(\mathbb{R}_+, \dmz)$ via commutators related to $R_{\Delta_\lambda}$. Second, we point out that a weak factorization for the Hardy space $H_{S_\lambda}^1(\mathbb{R}_+, dx)$ in terms of a bilinear form related to $R_{S_\lambda}$ is not true, by proving that the dual of $H_{S_\lambda}^1(\mathbb{R}_+, dx)$ can not be characterized by commutators related to $R_{S_\lambda}$.

To state our main results, we first recall some necessary notions and notation. Throughout this paper,
for any $x$, $r\in \mathbb{R}_+$, $I(x, r):=(x-r, x+r)\cap \mathbb{R}_+$.
 From the definition of the measure $m_\lambda$ (i.e., $dm_\lambda(x):=x^{2\lambda}dx$),  it is obvious that
 there exists a positive constant $C\in(1, \fz)$ such that for all $x,\,r\in\mathbb{R}_+$,
\begin{equation}\label{volume property-1}
 C^{-1}m_\lz(I(x, r))\le x^{2\lz}r+r^{2\lz+1}\le C m_\lz(I(x, r)).
\end{equation}
Thus  $(\mathbb{R}_+, \rho, dm_\lz)$ is a space of homogeneous type in the sense of Coifman and Weiss \cite{cw77},
where $\rho(x,y):=|x-y|$ for all $x,\,y\in\rrp$.

We now state our first main result on the weak factorization (via Riesz transform $\riz$ and its adjoint operator $\wriz$) of the Hardy space $\hoz$.
\begin{thm}\label{t-equiv character H1}
Let $p\in(1, \fz)$ and $p'$ be the conjugate of $p$. For any $f\in\hoz$, there exist numbers $\{\az^k_j\}_{k,\,j}$, functions $\{g^k_j\}_{k,\,j}\subset\lpz$ and $\{h^k_j\}_{k,\,j}\subset\lppz$ such that
\begin{equation}\label{represent of H1}
f=\sum_{k=1}^\fz\sum_{j=1}^\fz \az^k_j\,\Pi\lf(g^k_j,h^k_j\r)
\end{equation}
in $\hoz$, where the operator $\Pi$ is defined as follows: for $g\in \lpz$ and $h\in \lppz$,
\begin{equation}\label{def of pi}
\Pi(g,h):=g\riz h- h\wriz g.
\end{equation}
Moreover, there exists a positive constant $C$ independent of $f$ such that
\begin{eqnarray*}
C^{-1}\|f\|_\hoz&\le&\inf\lf\{\sum_{k=1}^\fz\sum_{j=1}^\fz\lf|\az^k_j\,\r|\lf\|g^k_j\r\|_\lpz\lf\|h^k_j\r\|_\lppz:\r.\\
&\quad&\lf.\quad \quad f=\sum_{k=1}^\fz\sum_{j=1}^\fz \az^k_j\,\Pi\lf(g^k_j,h^k_j\r)\r\}\le C\|f\|_\hoz.
\end{eqnarray*}
\end{thm}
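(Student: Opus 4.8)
The plan is to prove the two inequalities separately; the upper bound is the easy direction and the lower bound (the genuine weak factorization) is where the work lies. For the upper bound one first checks that the bilinear map $\Pi$ sends $\lpz\times\lppz$ boundedly into $\hoz$, i.e.\ $\|\Pi(g,h)\|_\hoz\lesssim\|g\|_\lpz\|h\|_\lppz$. This follows by writing $R_{\Delta_\lambda}\Pi(g,h)=R_{\Delta_\lambda}(g\riz h)-R_{\Delta_\lambda}(h\wriz g)$ and using the $L^2$-boundedness of $\riz$ (and $\wriz$) together with the Hölder pairing; more carefully one exploits the standard fact that the ``para-product-type'' difference $g\riz h - h\wriz g$ gains cancellation, so that its $\loz$ norm and the $\loz$ norm of its Riesz transform are controlled by the product of the $L^p$ and $L^{p'}$ norms. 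Granting this, if $f=\sum_{k,j}\az^k_j\Pi(g^k_j,h^k_j)$ converges in $\hoz$, then $\|f\|_\hoz\le\sum_{k,j}|\az^k_j|\,\|\Pi(g^k_j,h^k_j)\|_\hoz\lesssim\sum_{k,j}|\az^k_j|\,\|g^k_j\|_\lpz\|h^k_j\|_\lppz$, giving $C^{-1}\|f\|_\hoz\le\inf(\cdots)$.

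For the lower bound, the strategy is the classical Coifman–Rochberg–Weiss iteration adapted to the space of homogeneous type $(\rrp,\rho,dm_\lz)$. The key analytic input is an approximation lemma: given any $\hoz$-atom $a$ supported in an interval $I=I(x_0,r)$, one can find $g\in\lpz$ and $h\in\lppz$, supported in a fixed dilate of $I$, with $\|g\|_\lpz\|h\|_\lppz\lesssim 1$, such that $\|a-\Pi(g,h)\|_\hoz\le\tfrac12$ (or any fixed constant $<1$). To build such a $g,h$ one chooses $h$ to be a normalized bump adapted to $I$ and $g$ so that $g\riz h$ reproduces $a$ to leading order on $I$; here the explicit kernel of $\riz$ in the Bessel setting (from \cite{bdt,bcfr}) is used to show that on $I$ the operator $\riz$ applied to such a bump behaves like the Hilbert transform, so $\riz h$ is essentially a nonvanishing smooth function on a slightly larger interval, allowing one to solve for $g$; the error term, both the tail outside the dilate of $I$ and the contribution of $h\wriz g$, is small in $\hoz$ by the decay of the kernel and the size estimates, using \eqref{volume property-1}. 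Once this single-atom approximation is in hand, one iterates: write $f=\sum_k\lz_k a_k$ as an atomic decomposition with $\sum_k|\lz_k|\lesssim\|f\|_\hoz$, approximate each $\lz_k a_k$ by $\lz_k\Pi(g_k^1,h_k^1)$ up to an $\hoz$ error of norm $\le\tfrac12|\lz_k|$ times an atom-sum, collect these errors, re-expand them atomically, and repeat. Summing the geometric series over the iteration index yields the representation \eqref{represent of H1} with $\sum_{k,j}|\az^k_j|\,\|g^k_j\|_\lpz\|h^k_j\|_\lppz\lesssim\|f\|_\hoz$, which is the remaining inequality $\inf(\cdots)\le C\|f\|_\hoz$.

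The main obstacle is the single-atom approximation lemma, specifically controlling the operator $\riz$ on bump functions well enough to (i) invert it on a fixed interval to produce $g$ with the correct $L^p$ size, and (ii) show the off-support tail of $\Pi(g,h)$ is small in the Hardy norm rather than merely in $\loz$ — this requires the full strength of the kernel estimates for $\riz$ and $\wriz$ in the Bessel context, including their smoothness and cancellation properties, and a careful accounting that the dilated supports do not grow under iteration. A secondary technical point is ensuring all the series converge in $\hoz$ (not just formally), which is handled by the geometric decay of the iteration together with the boundedness of $\Pi$ established for the upper bound.
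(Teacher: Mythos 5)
Your overall architecture --- boundedness of $\Pi$ from $\lpz\times\lppz$ into $\hoz$ for the easy inequality, and a single-atom approximation lemma fed into the Coifman--Rochberg--Weiss iteration for the hard one --- is exactly the paper's, and your description of the iteration itself is fine. But both key technical steps have genuine gaps. For the upper bound, asserting that $g\riz h-h\wriz g$ ``gains cancellation'' so that it and its Riesz transform lie in $\loz$ is not an argument; it restates what must be proved. The paper's Proposition \ref{t-H1 estimate of pi} instead argues by duality: $\Pi(g,h)\in\loz$ with vanishing $m_\lz$-integral, and for $b\in\bmoz$ one has $\int_0^\fz b\,\Pi(g,h)\,dm_\lz=\int_0^\fz g\,[b,\riz]h\,dm_\lz$, so everything reduces to the commutator upper bound $\|[b,\riz]\|_{\lpz\to\lpz}\ls\|b\|_\bmoz$ (valid because $\riz$ is a Calder\'on--Zygmund operator on the homogeneous space), which you never invoke.

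For the atom approximation your construction fails as stated. You take $h$ to be a bump adapted to $I=I(x_0,r)$ and propose to solve for $g$ on a slightly larger interval using that $\riz h$ is ``essentially nonvanishing'' there. It is not: near the diagonal $\riz(x,y)$ behaves like $-\frac1\pi\frac1{x^\lz y^\lz}\frac1{x-y}$ (Proposition \ref{p-estimate of riesz kernel}(iii)), so $\riz h$ changes sign inside the support of $h$, exactly as the Hilbert transform of a positive bump does, and one cannot divide by it there. The working construction separates the supports: $g=\chi_{I(y_0,r)}$ with $|x_0-y_0|\sim Mr$ and $h=-a/\wriz g(x_0)$, where the kernel \emph{lower} bounds at separated points (Proposition \ref{p-estimate of riesz kernel} and Remark \ref{r lower bound}, which require a genuine case analysis special to the Bessel kernel) give $|\wriz g(x_0)|\gs 1/M$. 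Finally, you correctly flag that $\|a-\Pi(g,h)\|_\hoz$ must be estimated in the Hardy norm but supply no mechanism; the paper's is Lemma \ref{l-atomic estimate}, which bounds the $\hoz$-norm of a mean-zero function dominated by $D_1\chi_{I(x_0,r)}+D_2\chi_{I(y_0,r)}$ by $\log_2(|x_0-y_0|/r)$ times the total mass, yielding an error $\ls(\log_2 M)/M<\ez$. Note also that this forces $\|g\|_\lpz\|h\|_\lppz\ls M^{\frac{2\lz}p+1}$ rather than $O(1)$ as you claim, though that does not harm the iteration since $\ez$, and hence $M$, is fixed.
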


Our second main result provides a characterization of the BMO space $\bmoz$, which is the dual of $\hoz$,   in terms of
the commutators adapted to the Riesz transform $\riz$.  Recall the definition of the BMO space associated with the Bessel operator, which is the dual space of $\hoz$.
\begin{defn}[\cite{yy}]\label{d-bmo}
A function $f\in L^1_{\rm loc}(\rrp,dm_\lambda)$ belongs to
the {\it space} $\bmoz$ if
\begin{equation*}
\sup_{x,\,r\in(0,\,\fz)}\frac1{m_\lz(I(x,\,r))}\dint_{I(x,\,r)}
\lf|f(y)-\frac1{m_\lz(I(x, r))}\dint_{I(x,\,r)}f(z)\,dm_\lz(z)\r|\,dm_\lz(y)<\fz.
\end{equation*}
\end{defn}

%Recall that $\riz$ is bounded from $\hoz$ to $\loz$ (see \cite{cw77}).
Suppose $b\in L^1_{\rm loc}(\mathbb{R}_+,\,dm_\lz)$ and $f\in \lpz$. Let $[b, \riz]$ be the commutator defined by
\begin{equation*}%\label{commutator}
[b, \riz]f(x):= b(x)\riz f(x)-\riz(bf)(x).
\end{equation*}

\begin{thm}\label{t-commutator character BMO}
Let $b\in \cup_{q>1}L^q_{\rm loc}(\mathbb{R}_+,\,dm_\lz)$ and $p\in(1, \fz)$.

$(1)$ If $b\in\bmoz$, then the commutator  $[b,\riz]$ is bounded on $\lpz$ with the operator norm
\begin{eqnarray*}
\left\|[b, \riz] \right\|_{\lpz\to\lpz} \le C\|b\|_\bmoz.
\end{eqnarray*}

$(2)$ If $[b,\riz]$ is bounded on $\lpz$, then $b\in\bmoz$ and
\begin{eqnarray*}
\|b\|_{\bmoz}&\le& C\left\|[b, \riz] \right\|_{\lpz\to\lpz}.
\end{eqnarray*}
\end{thm}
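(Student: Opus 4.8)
The plan is to prove Theorem~\ref{t-commutator character BMO} in two directions, treating the boundedness (upper bound) and the necessity (lower bound) separately.

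\smallskip
\noindent\textbf{Part (1): $b\in\bmoz$ implies $[b,\riz]$ bounded.}
Since $(\rrp,\rho,dm_\lz)$ is a space of homogeneous type, I would invoke the standard Calder\'on--Zygmund commutator machinery of Coifman--Rochberg--Weiss, once I verify that $\riz$ is a Calder\'on--Zygmund operator on $(\rrp,\rho,dm_\lz)$: that it is bounded on $\ltz$ (known from \cite{ak,bcfr,bfbmt,ms,v08}) and that its kernel $R_{\Delta_\lz}(x,y)$ satisfies the size estimate $|R_{\Delta_\lz}(x,y)|\ls \frac1{m_\lz(I(x,|x-y|))}$ and the smoothness estimate $|R_{\Delta_\lz}(x,y)-R_{\Delta_\lz}(x',y)|\ls \frac{|x-x'|}{|x-y|}\frac1{m_\lz(I(x,|x-y|))}$ for $|x-x'|<|x-y|/2$ (and symmetrically in $y$). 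These kernel bounds are available from the Betancor et al.\ analysis in \cite{bdt,bhnv}. Given these, the $\lpz\to\lpz$ bound for $[b,\riz]$ with constant $C\|b\|_\bmoz$ follows from the classical argument: expand $[b,\riz]f(x)=\int (b(x)-b(y))R_{\Delta_\lz}(x,y)f(y)\,dm_\lz(y)$, use a Cotlar-type or sharp-maximal-function estimate controlling $([b,\riz]f)^\#$ by $M(|\riz f|^s)^{1/s}+M(|f|^s)^{1/s}$ for some $s>1$ together with the John--Nirenberg inequality (valid on spaces of homogeneous type), then conclude by the Fefferman--Stein inequality. The only mild subtlety is the hypothesis $b\in\cup_{q>1}L^q_{\loc}$, which guarantees $[b,\riz]f$ is well defined a priori; this is routine.

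\smallskip
\noindent\textbf{Part (2): $[b,\riz]$ bounded on $\lpz$ implies $b\in\bmoz$.}
Here I would use the weak factorization of $\hoz$ from Theorem~\ref{t-equiv character H1} together with a duality argument, which is cleaner than the classical ``median'' method and is precisely why the factorization is proved first. Fix an interval $I=I(x_0,r)$. By definition of $\Pi$ and the self-adjointness relation between $\riz$ and $\wriz$ with respect to $dm_\lz$, for $g\in\lpz$, $h\in\lppz$ one has the pointwise/pairing identity
\begin{equation*}
\int_\rrp b\,\Pi(g,h)\,dm_\lz=\int_\rrp b\,(g\,\riz h-h\,\wriz g)\,dm_\lz=-\int_\rrp h\,[b,\riz]g\,dm_\lz,
\end{equation*}
so $\lf|\int b\,\Pi(g,h)\,dm_\lz\r|\le \|[b,\riz]\|_{\lpz\to\lpz}\|g\|_\lpz\|h\|_\lppz$. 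Now given $f\in\hoz$, apply the factorization $f=\sum_{k,j}\az^k_j\Pi(g^k_j,h^k_j)$ and sum to get $\lf|\int b f\,dm_\lz\r|\ls \|[b,\riz]\|_{\lpz\to\lpz}\|f\|_\hoz$, i.e.\ $b$ induces a bounded linear functional on $\hoz$. Since $\bmoz$ is exactly the dual of $\hoz$ (Definition~\ref{d-bmo} and \cite{yy}), this gives $b\in\bmoz$ with $\|b\|_\bmoz\ls\|[b,\riz]\|_{\lpz\to\lpz}$. A technical point to check is that the pairing $\int bf\,dm_\lz$ makes sense and that the bound on finite linear combinations of $\Pi(g,h)$, which are dense in $\hoz$ by Theorem~\ref{t-equiv character H1}, extends to all of $\hoz$; this is standard once one knows $b\in\cup_{q>1}L^q_{\loc}$ so the integral converges against the compactly-supported-type atoms.

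\smallskip
\noindent\textbf{Main obstacle.}
The substantive work is not the duality bookkeeping but verifying that $\riz$ genuinely is a Calder\'on--Zygmund operator in the space-of-homogeneous-type sense on $(\rrp,\rho,dm_\lz)$ --- in particular the kernel smoothness estimates uniformly down to the boundary $x=0$, where the measure $dm_\lz$ degenerates. Establishing (or carefully citing) these kernel bounds, and ensuring the constants in the John--Nirenberg and Fefferman--Stein steps are the correct homogeneous-type versions, is where the care is needed; the commutator estimates themselves are then formal consequences of the classical scheme.
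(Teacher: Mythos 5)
Your proposal follows essentially the same route as the paper: part (1) is obtained by verifying that $\riz$ is a Calder\'on--Zygmund operator on the space of homogeneous type $(\mathbb{R}_+,\rho,dm_\lambda)$ (the paper does exactly this in Proposition \ref{t:RieszCZ} and then simply cites the Bramanti--Cerutti commutator theorem \cite{bc} rather than rerunning the sharp-maximal-function argument), and part (2) is the same duality argument against the weak factorization of Theorem \ref{t-equiv character H1}. One small algebraic slip worth fixing: with $\Pi(g,h)=g\riz h-h\wriz g$ and $\wriz$ the $dm_\lambda$-adjoint of $\riz$, the correct pairing identity is $\int_{\mathbb{R}_+} b\,\Pi(g,h)\,dm_\lambda=\int_{\mathbb{R}_+} g\,[b,\riz]h\,dm_\lambda$ (equivalently $-\int_{\mathbb{R}_+} h\,[b,\wriz]g\,dm_\lambda$), not $-\int_{\mathbb{R}_+} h\,[b,\riz]g\,dm_\lambda$; this does not affect the resulting estimate.
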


We will provide the proof
of Theorems \ref{t-equiv character H1} and \ref{t-commutator character BMO} in the
following structure: we first provide the proof of (1) in Theorem \ref{t-commutator character BMO},
 which plays the key role to prove Theorem \ref{t-equiv character H1}. Then, (2) in Theorem \ref{t-commutator character BMO}
 follows directly from
 the weak factorization  of
$\hoz$ in Theorem \ref{t-equiv character H1} via duality.

The next main result that we provide is that the BMO space BMO$_{S_\lambda}(\mathbb{R}_+, dx)$ associated with $S_\lambda$, which is the dual of $H^1_{S_\lambda}(\mathbb{R}_+,dx)$,  can not be characterized by the commutators with respect to the Riesz transform $R_{S_\lambda}$.
\begin{thm}\label{t-commutator not character BMO}
There exists a locally integrable function  $b\not\in {\rm BMO}_{S_\lambda}(\mathbb{R}_+,\,dx)$, such that for $1<p<\infty$,
 the commutator  $[b,R_{S_\lz}]$ is bounded on $L^p(\mathbb{R}_+,\,dx)$ with the operator norm
\begin{eqnarray*}
\left\|[b, R_{S_\lz}] \right\|_{L^p(\mathbb{R}_+,\,dx) \to L^p(\mathbb{R}_+,\,dx)} \le C_b,
\end{eqnarray*}
where $C_b$ is a positive constant related to the function $b$.
\end{thm}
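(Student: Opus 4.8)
The plan is to exploit the explicit relation between the two Bessel operators $\Delta_\lambda$ and $S_\lambda$, namely that they are conjugate via multiplication by the power weight $x^\lambda$: one has $S_\lambda = x^\lambda \Delta_\lambda x^{-\lambda}$ (acting on appropriate domains), and consequently $(S_\lambda)^{-1/2} = x^\lambda (\Delta_\lambda)^{-1/2} x^{-\lambda}$ and $R_{S_\lambda} = x^\lambda \partial_x x^{-\lambda}(S_\lambda)^{-1/2} = x^\lambda \big(\partial_x (\Delta_\lambda)^{-1/2}\big) x^{-\lambda} = x^\lambda R_{\Delta_\lambda} x^{-\lambda}$. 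Likewise the map $f \mapsto x^\lambda f$ is an isometry from $L^p(\mathbb{R}_+, dm_\lambda)$ onto $L^p(\mathbb{R}_+, dx)$ for every $p\in(1,\infty)$, since $\int_0^\infty |x^\lambda f(x)|^p\,dx = \int_0^\infty |f(x)|^p x^{p\lambda}\,dx$ — wait, this needs $p\lambda$ to match $2\lambda$, which only holds when $p=2$; so instead the correct weighted intertwining is $f\mapsto x^{2\lambda/p'}\!\cdot$, or more robustly one works directly with the kernel identification below. The cleanest route is therefore to write the commutator $[b, R_{S_\lambda}]$ directly in terms of $[x^{-\lambda}b x^\lambda\,\cdot\,, R_{\Delta_\lambda}]$-type expressions and track how the multiplier $x^{\pm\lambda}$ interacts; I will phrase the argument at the level of kernels to avoid the $L^p$-isometry pitfall.

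First I would record the precise kernel relation: if $R_{\Delta_\lambda}$ has kernel $K(x,y)$ with respect to $dm_\lambda$, then $R_{S_\lambda}$ has kernel $\widetilde K(x,y) = x^\lambda K(x,y) y^{-\lambda} y^{2\lambda} = x^\lambda y^\lambda K(x,y)$ with respect to $dx$ (the factor $y^{2\lambda}$ converting $dm_\lambda$ to $dx$). Then for the commutator,
\begin{align*}
[b, R_{S_\lambda}]f(x) &= \int_0^\infty \big(b(x)-b(y)\big)\, x^\lambda y^\lambda K(x,y)\, f(y)\,dy.
\end{align*}
The key observation is that this integral kernel is \emph{unchanged} if we replace $b(x)$ by $b(x) + c(x)$ whenever $c(x)-c(y) \equiv 0$ in a suitable degenerate sense — but more usefully, I want to find a symbol $b$ that is \emph{not} in $\mathrm{BMO}_{S_\lambda}(\mathbb{R}_+, dx)$ yet for which $b(x)-b(y)$, multiplied against $x^\lambda y^\lambda K(x,y)$, still produces a Calderón–Zygmund kernel bounded on $L^p(\mathbb{R}_+,dx)$. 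The natural candidate is $b(x) = c\log x$ (or $b(x)=\log x$), or more precisely a function built so that $b(x)-b(y)$ cancels the logarithmic growth hidden in the weight $x^\lambda y^\lambda$ relative to the $dm_\lambda$-structure; alternatively $b(x) = x^{-2\lambda}$-type symbols or even $b$ equal to a fixed nonzero constant plus something forced by the different normalizations of the two BMO spaces. In fact the most transparent choice: take $b$ to be any function that lies in $\mathrm{BMO}(\mathbb{R}_+, dm_\lambda)$ (so $[b,R_{\Delta_\lambda}]$ is bounded on $L^p(\mathbb{R}_+,dm_\lambda)$ by Theorem \ref{t-commutator character BMO}(1)) but such that its ``transplant'' governing $[b,R_{S_\lambda}]$ is not in $\mathrm{BMO}_{S_\lambda}(\mathbb{R}_+,dx)$ because the latter space is strictly smaller near the origin — concretely one expects $\mathrm{BMO}_{S_\lambda}(\mathbb{R}_+,dx)$ to ``see'' $x=0$ differently, so a symbol like $b(x)=\log\frac1x$ restricted/truncated appropriately will be $\mathrm{BMO}_{\Delta_\lambda}$ but fail the $S_\lambda$-BMO John–Nirenberg condition on small intervals $I(x,r)$ with $r \sim x \to 0$.

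The steps, in order, are: (i) establish the conjugation identities $S_\lambda = x^\lambda \Delta_\lambda x^{-\lambda}$, $R_{S_\lambda} = x^\lambda R_{\Delta_\lambda} x^{-\lambda}$, and the kernel formula $\widetilde K(x,y) = x^\lambda y^\lambda K(x,y)$ (using the kernel bounds/estimates for $R_{\Delta_\lambda}$ from \cite{bdt}, already invoked earlier in the paper); (ii) choose the explicit symbol $b$ — my working choice is $b(x) = \log x$, or a smooth truncation thereof to keep it locally integrable and to isolate the behavior near $0$; (iii) show $[b, R_{S_\lambda}]$ is bounded on $L^p(\mathbb{R}_+,dx)$ by reducing, via the conjugation, to boundedness of $x^\lambda[b,R_{\Delta_\lambda}]x^{-\lambda}$ together with an \emph{extra commutator term} coming from $[x^\lambda, R_{\Delta_\lambda}]$ and $[x^{-\lambda},\cdot]$ — these extra terms must be shown bounded directly using the explicit kernel $K(x,y)$ and the algebraic identity $(\log x - \log y)$ paired with the homogeneity of $K$, this is where one uses that $\log x - \log y = \log(x/y)$ is controlled by $|x-y|/\min(x,y)$ when $|x-y|\lesssim\min(x,y)$ and is handled by rapid kernel decay otherwise; (iv) show $b \notin \mathrm{BMO}_{S_\lambda}(\mathbb{R}_+,dx)$ by exhibiting a sequence of intervals — necessarily intervals $I(x_k, r_k)$ with $r_k$ comparable to $x_k \to 0$, since on such intervals the $S_\lambda$-BMO oscillation functional (which, unlike the classical one, has an extra non-translation-invariant term reflecting the $\lambda^2-\lambda$ potential) blows up for $b=\log x$, whereas on intervals away from $0$ the oscillation of $\log x$ is uniformly bounded.

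The main obstacle I anticipate is step (iii): the conjugation by $x^{\pm\lambda}$ does not commute with $R_{\Delta_\lambda}$, so $[b,R_{S_\lambda}]$ is not simply a conjugate of $[b,R_{\Delta_\lambda}]$; the correction terms are paracommutator-type operators with kernel $(\log x-\log y)(x^\lambda y^\lambda - 1)K(x,y)$-flavored expressions, and proving their $L^p(\mathbb{R}_+,dx)$-boundedness requires a careful Schur-test / Calderón–Zygmund argument using the precise size and regularity estimates for $K(x,y)$ near the diagonal and the specific algebraic cancellation between $\log(x/y)$ and the kernel's behavior. A secondary subtlety is verifying that $b=\log x$ genuinely violates the $\mathrm{BMO}_{S_\lambda}$ condition and not merely the classical one — this hinges on writing out Definition-style oscillation for $\mathrm{BMO}_{S_\lambda}(\mathbb{R}_+,dx)$ (its description from \cite{bdt}) and testing on the shrinking intervals; the computation is elementary but the choice of test intervals must be made with the structure of that space in mind, and it is conceivable one instead needs a symbol adapted more cleverly than $\log x$, in which case I would fall back on $b(x) = x^{-2\lambda}\chi_{(0,1)}(x)$ or a similar weight-matched function and repeat steps (iii)–(iv).
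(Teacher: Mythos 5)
Your overall strategy --- exhibit a symbol $b$ outside the operator-adapted BMO space for which the commutator is nonetheless $L^p$-bounded --- and your candidate $b(x)=\log x$ are exactly what the paper uses, but both of the steps you yourself flag as the main obstacles contain genuine gaps, and in each case the paper's resolution is much simpler than what you propose. For the boundedness of $[\log x, R_{S_\lambda}]$ on $L^p(\mathbb{R}_+,dx)$ you do not need the conjugation $R_{S_\lambda}=x^\lambda R_{\Delta_\lambda}x^{-\lambda}$ at all: Proposition \ref{p upper} (quoted from \cite{bfbmt}) already states that the kernel of $R_{S_\lambda}$ satisfies the \emph{classical} Calder\'on--Zygmund size and gradient bounds with respect to Lebesgue measure, so the Coifman--Rochberg--Weiss theorem gives $\|[b,R_{S_\lambda}]\|_{L^p\to L^p}\lesssim\|b\|_{\mathrm{BMO}(\mathbb{R}_+,dx)}$ for every $b$ in the classical John--Nirenberg BMO, and $\log x$ is such a function. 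Your conjugation route, as written, does not close: since $b$ and $x^{\pm\lambda}$ are all multiplication operators one has exactly $[b,x^\lambda R_{\Delta_\lambda}x^{-\lambda}]=x^\lambda[b,R_{\Delta_\lambda}]x^{-\lambda}$ with \emph{no} correction terms (so the paracommutator analysis you anticipate is a red herring), but the real obstruction is the one you noticed and then set aside: $f\mapsto x^{-\lambda}f$ carries $L^p(\mathbb{R}_+,dx)$ onto $L^p(\mathbb{R}_+,dm_\lambda)$ isometrically only when $p=2$, so Theorem \ref{t-commutator character BMO}(1) transfers to $L^p(\mathbb{R}_+,dx)$ only at $p=2$; for general $p$ you would need weighted norm inequalities that you do not supply.

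The second gap is in your step (iv). The paper identifies $\mathrm{BMO}_{S_\lambda}(\mathbb{R}_+,dx)$ with $\mathrm{BMO}_o(\mathbb{R}_+,dx)$, the space of $f$ whose \emph{odd extension} to $\mathbb{R}$ lies in $\mathrm{BMO}(\mathbb{R})$ (see \eqref{BMOo}--\eqref{BMOoo}, resting on \cite{bdt,CKS,DDSY}); then $\log x\notin\mathrm{BMO}_{S_\lambda}(\mathbb{R}_+,dx)$ because $\mathrm{sgn}(x)\log|x|\notin\mathrm{BMO}(\mathbb{R})$ --- equivalently, membership in $\mathrm{BMO}_o$ forces $\frac1{|I|}\int_I|f|\,dx\lesssim1$ on intervals $I$ abutting the origin, which fails for $\log x$. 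Your proposed test --- computing a classical-type mean oscillation of $\log x$ over intervals $I(x_k,r_k)$ with $r_k\sim x_k\to0$ --- detects nothing, since the mean oscillation of $\log x$ over every such interval is uniformly bounded (that is precisely why $\log x$ \emph{is} in classical BMO), and the ``extra oscillation term reflecting the $\lambda^2-\lambda$ potential'' you invoke is not how the dual of $H^1_{S_\lambda}(\mathbb{R}_+,dx)$ is actually described. Without the odd-extension characterization, step (iv) does not go through.
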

As a consequence, we have the following argument.
\begin{cor}\label{c weak factorization}
 A weak factorization for $H^1_{S_\lambda}(\mathbb{R}_+,dx)$ in the form of Theorem \ref{t-equiv character H1} with respect to a bilinear form
realated to $R_{S_\lz}$ is not true.
\end{cor}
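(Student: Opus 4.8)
The plan is to argue by contradiction, running the duality argument that the paper already uses to extract part~(2) of Theorem~\ref{t-commutator character BMO} from the weak factorization of Theorem~\ref{t-equiv character H1}, but now with the operator $R_{S_\lz}$ in place of $\riz$. Concretely, I would first spell out what ``a weak factorization for $H^1_{S_\lambda}(\mathbb{R}_+,dx)$ in the form of Theorem~\ref{t-equiv character H1}'' asserts: fixing $p\in(1,\fz)$ with conjugate exponent $p'$, it says that every $f\in H^1_{S_\lambda}(\mathbb{R}_+,dx)$ admits a representation $f=\sum_{k,j}\az^k_j\,\Pi_{S_\lambda}(g^k_j,h^k_j)$, converging in $H^1_{S_\lambda}(\mathbb{R}_+,dx)$, with $\{g^k_j\}\subset L^p(\mathbb{R}_+,dx)$, $\{h^k_j\}\subset L^{p'}(\mathbb{R}_+,dx)$, and
$$
\|f\|_{H^1_{S_\lambda}(\mathbb{R}_+,dx)}\approx\inf\sum_{k,j}|\az^k_j|\,\|g^k_j\|_{L^p(\mathbb{R}_+,dx)}\,\|h^k_j\|_{L^{p'}(\mathbb{R}_+,dx)},
$$
where $\Pi_{S_\lambda}(g,h):=g\,R_{S_\lz}h-h\,R_{S_\lz}^\ast g$ and $R_{S_\lz}^\ast$ is the formal adjoint of $R_{S_\lz}$ on $L^2(\mathbb{R}_+,dx)$. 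I would then show that the existence of such a factorization forces every $b$ for which $[b,R_{S_\lz}]$ is bounded on $L^{p'}(\mathbb{R}_+,dx)$ to belong to ${\rm BMO}_{S_\lambda}(\mathbb{R}_+,dx)$, which contradicts Theorem~\ref{t-commutator not character BMO}.

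The engine is the pairing identity
$$
\langle b,\Pi_{S_\lambda}(g,h)\rangle=\big\langle[b,R_{S_\lz}]h,\,g\big\rangle,\qquad g\in L^p(\mathbb{R}_+,dx),\quad h\in L^{p'}(\mathbb{R}_+,dx),
$$
obtained by expanding $\Pi_{S_\lambda}$, transferring $R_{S_\lz}^\ast$ in the second term back onto the first slot to produce $R_{S_\lz}(bh)$, and collecting the two resulting terms as $b\,R_{S_\lz}h-R_{S_\lz}(bh)=[b,R_{S_\lz}]h$. Let $b$ be the function supplied by Theorem~\ref{t-commutator not character BMO}: locally integrable, not in ${\rm BMO}_{S_\lambda}(\mathbb{R}_+,dx)$, yet with $[b,R_{S_\lz}]$ bounded on every $L^q(\mathbb{R}_+,dx)$, $q\in(1,\fz)$. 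Since the dual of $H^1_{S_\lambda}(\mathbb{R}_+,dx)$ is ${\rm BMO}_{S_\lambda}(\mathbb{R}_+,dx)$, for any $f$ with $\|f\|_{H^1_{S_\lambda}(\mathbb{R}_+,dx)}\le1$ I would pick a near-optimal factorization of $f$, pass the continuous functional $\langle b,\cdot\rangle$ through the convergent series, apply the pairing identity term by term, and then use H\"older's inequality to get
$$
|\langle b,f\rangle|\le\sum_{k,j}|\az^k_j|\,\big\|[b,R_{S_\lz}]h^k_j\big\|_{L^{p'}(\mathbb{R}_+,dx)}\,\|g^k_j\|_{L^p(\mathbb{R}_+,dx)}\lesssim\big\|[b,R_{S_\lz}]\big\|_{L^{p'}\to L^{p'}}\sum_{k,j}|\az^k_j|\,\|g^k_j\|_{L^p}\,\|h^k_j\|_{L^{p'}}\lesssim\big\|[b,R_{S_\lz}]\big\|_{L^{p'}\to L^{p'}}.
$$
Taking the supremum over such $f$ would yield $\|b\|_{{\rm BMO}_{S_\lambda}(\mathbb{R}_+,dx)}\lesssim\|[b,R_{S_\lz}]\|_{L^{p'}\to L^{p'}}<\fz$, i.e.\ $b\in{\rm BMO}_{S_\lambda}(\mathbb{R}_+,dx)$, contradicting the choice of $b$. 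Hence no weak factorization of the stated form can exist, which is the assertion of Corollary~\ref{c weak factorization}.

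The two points that require care --- and which I expect to be the only real obstacles --- are exactly the ones met in the $\Delta_\lambda$ case. First, the pairing identity must be justified rigorously when $b$ is merely in $\bigcup_{q>1}L^q_{\rm loc}(\mathbb{R}_+,dx)$ rather than bounded: I would first establish it for $b$ truncated to a bounded, compactly supported function, using the kernel/Calder\'on--Zygmund properties of $R_{S_\lz}$ on the space of homogeneous type $(\mathbb{R}_+,|\cdot|,dx)$ (available from \cite{bdt}) together with a density argument in $g$ and $h$, and then remove the truncation by a limiting argument. Second, the interchange of $\langle b,\cdot\rangle$ with the infinite sum must be justified, which is immediate from convergence of the series in $H^1_{S_\lambda}(\mathbb{R}_+,dx)$ and from $b$ lying in its dual. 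Since these are precisely the steps by which part~(2) of Theorem~\ref{t-commutator character BMO} is deduced from Theorem~\ref{t-equiv character H1}, the substantive new input is entirely carried by Theorem~\ref{t-commutator not character BMO}, and Corollary~\ref{c weak factorization} then follows as a formal consequence.
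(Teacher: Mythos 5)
Your argument is correct and is essentially identical to the paper's own proof: both proceed by contradiction, running the duality argument from the proof of part~(2) of Theorem~\ref{t-commutator character BMO} with $R_{S_\lambda}$ in place of $R_{\Delta_\lambda}$ to deduce $\|b\|_{{\rm BMO}_{S_\lambda}(\mathbb{R}_+,\,dx)}\lesssim\|[b,R_{S_\lambda}]\|_{L^{p}\to L^{p}}$, and then invoking the function $b_0(x)=\log x$ from Theorem~\ref{t-commutator not character BMO} (bounded commutator but infinite ${\rm BMO}_o$ norm) to reach a contradiction. The only differences are cosmetic (your use of $p'$ versus the paper's $p$, and your more detailed discussion of justifying the pairing identity).
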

%\medskip
%Then a natural question arises: Is there a proper BMO space which can be characterized by the commutators with respect to the Riesz transform $R_{S_\lz}$? In other words,  is there a proper Hardy space which has a weak factorization  in the form of Theorem \ref{t-equiv character H1} with respect to a bilinear form
%related to $R_{S_\lz}$?
%\medskip
%
%We provide a confirmative answer to this question, i.e., the classical BMO space on $\rrp$ is the right candidate, which is defined as follows.
%$${\rm BMO}(\mathbb{R}_+,\,dx):=\{ f\in L^1_{\rm loc}(\rrp,dx):  \|f\|_{{\rm BMO}(\mathbb{R}_+,\,dx) }<\fz\},$$
%where $\|f\|_{{\rm BMO}(\mathbb{R}_+,dx) }:=\sup_{I\subset\rrp} {1\over |I|}\int_{I}|f(x)-f_I|dx$, $f_I:={1\over |I|}\int_I f(x)dx$.
%
%\begin{thm}\label{t-commutator character BMO 2}
%Let $b\in \cup_{q>1}L^q_{\rm loc}(\mathbb{R}_+,\,dx)$ and $p\in(1, \fz)$.
%
%$(1)$ If $b\in  {\rm BMO}(\mathbb{R}_+,dx)$, then the commutator  $[b,R_{S_\lz}]$ is bounded on $L^p(\mathbb{R}_+,\,dx)$ with the operator norm
%\begin{eqnarray*}
%\left\|[b, R_{S_\lz}] \right\|_{L^p(\mathbb{R}_+,\,dx)\to L^p(\mathbb{R}_+,\,dx)} \le C\|b\|_{{\rm BMO}(\mathbb{R}_+,\,dx)}.
%\end{eqnarray*}
%
%$(2)$ If $[b,R_{S_\lz}]$ is bounded on $L^p(\mathbb{R}_+,\,dx)$, then $b\in  {\rm BMO}(\mathbb{R}_+,dx)$ and
%\begin{eqnarray*}
%\|b\|_{{\rm BMO}(\mathbb{R}_+,dx)}&\le& C\left\|[b, R_{S_\lz}] \right\|_{L^p(\mathbb{R}_+,\,dx) \to L^p(\mathbb{R}_+,\,dx)}.
%\end{eqnarray*}
%\end{thm}

An outline of the paper is as follows.  In Section \ref{s2} we recall the Hardy spaces
 associated with $\Delta_\lambda$ and $S_{\lambda}$. Also we collect some fundamental
  estimates of the kernel of the Riesz transforms  $\riz$ and $R_{S_\lambda}$,
   especially the size estimate and the H\"older's regularity in Proposition \ref{t:RieszCZ}
    (Proposition \ref{p upper}), and the kernel lower bounds
in Proposition \ref{p-estimate of riesz kernel}
for $\riz$. In Section \ref{s:MainResult} we prove Theorems \ref{t-equiv character H1} and \ref{t-commutator character BMO}. We note that
our auxiliary result Lemma \ref{l-atomic estimate} is an important ingredient of the proof of
 Theorem \ref{t-equiv character H1}, an earlier analogue of which appears in \cite{lw} but with different proof.
In Section \ref{s:MainResult 2} we
prove Theorem \ref{t-commutator not character BMO} by providing a specific example of the locally integrable
function  $b$ from the classical BMO space ${\rm BMO}(\mathbb{R}_+,\,dx)$ but $b\not\in {\rm BMO}_{S_\lambda}(\mathbb{R}_+,dx)$.
 Then we prove Corollary \ref{c weak factorization}. %In the end, we provide the proof of Theorem \ref{t-commutator character BMO 2}.

\medskip
Throughout the paper,
we denote by $C$ and $\widetilde{C}$ {\it positive constants} which
are independent of the main parameters, but they may vary from line to
line. For every $p\in(1, \fz)$, we denote by $p'$ the conjugate of $p$, i.e., $\frac{1}{p'}+\frac{1}{p}=1$.  If $f\le Cg$, we then write $f\ls g$ or $g\gs f$;
and if $f \ls g\ls f$, we  write $f\sim g.$
For any $k\in \mathbb{R}_+$ and $I:= I(x, r)$ for some $x$, $r\in (0, \fz)$,
$kI:=I(x, kr)$.

\section{Hardy and BMO spaces,  Riesz transforms  associated with $\Delta_\lambda$ and $S_\lambda$}\label{s2}

In this section we recall the Hardy and BMO spaces, and some important properties of   and  Riesz transforms
related to the Bessel operator $\Delta_\lambda$ and $S_\lambda$  from \cite{ms,bdt,bfbmt,bfs}.

We now recall the atomic characterization of the Hardy spaces $\hoz$ in \cite{bdt}.
%; see also\cite{cw77}.

\begin{defn}[\cite{bdt}]\label{d-atomic H1}
 A function $a$ is called a $(1, \fz)_{\Delta_\lz}$-atom
if there exists an open bounded interval $I\subset \mathbb{R}_+$
such that $\supp(a)\subset I$, $\|a\|_\linz\le[m_\lz(I)]^{-1}$
and $\int_0^\fz a(x)\,\dmz(x)=0.$
\end{defn}

We point out that from \cite{bdt}, the Hardy space $\hoz$ can be characterized via atomic decomposition. That is,
an $\loz$ function $f\in \hoz$ if and only if
$$f=\sum_{j=1}^\fz\az_j a_j\quad {\rm in}\quad  \loz,$$ where for every $j$,
$a_j$ is a $(1, \fz)_{\Delta_\lz}$-atom and $\az_j\in\rr$ satisfying that
$\sum_{j=1}^\fz|\az_j|<\fz$. Moreover,
$\|f\|_\hoz\approx \inf\lf\{\sum_{j=1}^\fz|\az_j|\r\},$
where the infimum is taken over all the decompositions of $f$ as above.

We also note that $\hoz$  can also be
characterized in terms of the radial
maximal function associated with the Hankel convolution of a class
of functions, including the Poisson semigroup and the heat
semigroup as special cases.  It is also  proved in \cite{bdt} that $H^1(\mathbb{R}_+, \dmz)$ is the one associated with
the space of homogeneous type $(\mathbb{R}_+, \rho, dm_\lz)$ defined by Coifman and Weiss in \cite{cw77}.

%DO WE NEED A STATEMENT ABOUT EQUIVALENCE OF NORMS HERE?

Next we recall the Poisson integral, the conjugate Poisson integral and the properties of the Riesz transforms.
As in \cite{bdt}, let $\{P^{\lz}_t\}_{t>0}$ be the Poisson semigroup $\{e^{-t\sqrt{\Delta_\lz}}\}_{t>0}$ defined by
\begin{equation*}
P^{[\lz]}_tf(x):=\inzf P^{[\lz]}_t(x,y)f(y)y^{2\lz}\,dy,
\end{equation*}
where
\begin{equation*}
P^{[\lz]}_t(x,y)=\inzf e^{-tz}(xz)^{-\lz+1/2}J_{\lz-1/2}(xz)(yz)^{-\lz+1/2}J_{\lz-1/2}(yz)z^{2\lz}\,dz
\end{equation*}
and $J_\nu$ is the Bessel function of the first kind and order $\nu$.  Weinstein \cite{w48} established the following formula for $P^{[\lz]}_t(x,y)$: $t,\,x,\, y\in\mathbb{R}_+$,
\begin{equation*}
P^{[\lz]}_t(x,y)=\frac{2\lz t}{\pi}\int_0^\pi\frac{(\sin\theta)^{2\lz-1}}{(x^2+y^2+t^2-2xy\cos\theta)^{\lz+1}}\,d\theta.
\end{equation*}
The $\Delta_\lambda$-conjugate of the Poisson integral of $f$ is defined by
$$  Q^{[\lz]}_tf(x):=\inzf Q^{[\lz]}_t(x,y)f(y)y^{2\lz}\,dy,
 $$
where
\begin{equation*}
Q^{[\lz]}_t(x,y)=\frac{-2\lz }{\pi}\int_0^\pi\frac{ (x-y\cos\theta) (\sin\theta)^{2\lz-1}}{(x^2+y^2+t^2-2xy\cos\theta)^{\lz+1}}\,d\theta.
\end{equation*}
From this, we deduce that for any $x,\,y\in\rrp$,
\begin{eqnarray}\label{riesz kernel}
\riz(x, y)=\partial_x\inzf P^{[\lz]}_t(x,y)\,dt=-\dfrac{2\lz}{\pi}\dint_0^\pi\dfrac{(x-y\cos\theta)(\sin\theta)^{2\lz-1}}
{(x^2+y^2-2xy\cos\theta)^{\lz+1}}\,d\theta = \lim_{t\to0} Q^{[\lz]}_tf(x).\quad\quad
\end{eqnarray}

We note that, as indicated in \cite{bdt}, this Riesz transform $\riz$ is a Calder\'on--Zygmund operator (see also \cite{bfs}).
For the convenience of the readers we provide all the details of the verification here.
\begin{prop}
\label{t:RieszCZ}
The kernel $\riz(x,y)$ satisfies the following conditions:
\begin{itemize}
  \item [i)] for every $x,y\in\mathbb{R}_+$ with $x\not=y$,
  \begin{equation}\label{cz kernel condition-1}
  |\riz(x,y)|\ls \frac1{m_\lz(I(x, |x-y|))} ;
  \end{equation}
  \item [ii)] for every $x,\,x_0,\, y\in \mathbb{R}_+$ with $|x_0-x|<|x_0-y|/2$,
  \begin{eqnarray}\label{cz kernel condition-2}
   &&|\riz(y, x_0)-\riz(y,x)|+ |\riz(x_0, y)-\riz(x,y)|\nonumber\\
   &&\quad\ls \frac{|x_0-x|}{|x_0-y|}\frac1{m_\lz(I(x_0, |x_0-y|))}.
  \end{eqnarray}
\end{itemize}
\end{prop}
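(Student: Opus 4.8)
The plan is to argue directly from the closed formula \eqref{riesz kernel}, rewritten as
\[
\riz(x,y)=-\frac{2\lz}{\pi}\int_0^\pi\frac{(x-y\cos\theta)(\sin\theta)^{2\lz-1}}{D(\theta)^{\lz+1}}\,d\theta,\qquad D(\theta):=x^2+y^2-2xy\cos\theta,
\]
and to reduce both \eqref{cz kernel condition-1} and \eqref{cz kernel condition-2} to a single family of integral bounds. Writing $r:=|x-y|$, the identity $D(\theta)=r^2+4xy\sin^2(\theta/2)$ gives $D(\theta)\ge r^2$, and the two completions of squares $D(\theta)-(x-y\cos\theta)^2=y^2\sin^2\theta$ and $D(\theta)-(y-x\cos\theta)^2=x^2\sin^2\theta$ give the pointwise inequalities $|x-y\cos\theta|\le D(\theta)^{1/2}$ and $|y-x\cos\theta|\le D(\theta)^{1/2}$. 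These are the only structural facts about the kernel I expect to need.

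The core step, which I would establish first, is the estimate: for $\mu\in\{\tfrac12,1\}$ and all $x,y\in\mathbb{R}_+$ with $x\neq y$,
\[
\int_0^\pi\frac{(\sin\theta)^{2\lz-1}}{D(\theta)^{\lz+\mu}}\,d\theta\ \ls\ \frac1{r^{2\mu-1}\,m_\lz(I(x,r))}.
\]
Since $\lz>0$, the weight $(\sin\theta)^{2\lz-1}$ is integrable on $(0,\pi)$, so everything rests on the decay of $D(\theta)^{-\lz-\mu}$, and I would split according to the two regimes that also govern \eqref{volume property-1}. If $y\le x/2$ or $y\ge 2x$, then $r\sim x+y$ and $D(\theta)\sim r^2$ for all $\theta$, so the integral is $\ls r^{-2\lz-2\mu}$, while $m_\lz(I(x,r))\sim r^{2\lz+1}$ since $x\ls r$, and the claim follows. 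If $x/2<y<2x$, then $x\sim y$ and $r<x$, so $D(\theta)\sim r^2+x^2\theta^2$ and $m_\lz(I(x,r))\sim x^{2\lz}r$; here I would break $\int_0^\pi$ at $\theta=r/x$, using $D(\theta)\sim r^2$ on $[0,r/x]$ and $D(\theta)\sim x^2\theta^2$ on $[r/x,\pi]$, splitting this last interval once more at $\theta=\pi/2$ so that the vanishing of $\sin\theta$ near $\pi$ is harmless when $\lz<\tfrac12$. Each of the resulting integrals is elementary and of size $\ls(x^{2\lz}r^{2\mu})^{-1}$, which is the claim in this regime. This case analysis is the only genuinely computational ingredient, and I expect the bookkeeping of the sub-cases to be the main obstacle; everything else is formal.

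Granting this estimate, \eqref{cz kernel condition-1} is immediate: bounding $|x-y\cos\theta|\le D(\theta)^{1/2}$ inside the integral for $\riz(x,y)$ and applying the estimate with $\mu=\tfrac12$ gives $|\riz(x,y)|\ls m_\lz(I(x,|x-y|))^{-1}$. For \eqref{cz kernel condition-2} I would differentiate under the integral sign (legitimate for $x\neq y$, where the integrand and its first partials are continuous in all variables and dominated near $\theta=0$ by a constant multiple of $(\sin\theta)^{2\lz-1}$). With $N:=x-y\cos\theta$ one has $\partial_xD=2N$ and $\partial_yD=2(y-x\cos\theta)$, hence
\begin{align*}
\partial_x\!\left(\frac{N}{D^{\lz+1}}\right)&=\frac1{D^{\lz+1}}-\frac{2(\lz+1)N^2}{D^{\lz+2}},\\
\partial_y\!\left(\frac{N}{D^{\lz+1}}\right)&=\frac{-\cos\theta}{D^{\lz+1}}-\frac{2(\lz+1)N(y-x\cos\theta)}{D^{\lz+2}}.
\end{align*}
The bounds $N^2\le D$, $|N(y-x\cos\theta)|\le D$ and $|\cos\theta|\le1$ then give $|\partial_x(\cdot)|+|\partial_y(\cdot)|\ls D(\theta)^{-\lz-1}$, so the estimate with $\mu=1$ yields $|\partial_x\riz(x,y)|+|\partial_y\riz(x,y)|\ls\big(|x-y|\,m_\lz(I(x,|x-y|))\big)^{-1}$.

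Finally, for $|x_0-x|<|x_0-y|/2$ and any $z$ on the segment joining $x_0$ to $x$, one has $|z-y|\sim|x_0-y|$, and by \eqref{volume property-1} the three quantities $m_\lz(I(z,|z-y|))$, $m_\lz(I(y,|z-y|))$ and $m_\lz(I(x_0,|x_0-y|))$ are mutually comparable. Applying the mean value theorem separately in each of the two variables to the gradient bound of the previous paragraph then gives
\[
|\riz(y,x_0)-\riz(y,x)|+|\riz(x_0,y)-\riz(x,y)|\ \ls\ \frac{|x_0-x|}{|x_0-y|}\,\frac1{m_\lz(I(x_0,|x_0-y|))},
\]
which is exactly \eqref{cz kernel condition-2}, completing the proof.
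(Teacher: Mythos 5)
Your proof is correct and follows essentially the same route as the paper's: both rest on the pointwise bound $|x-y\cos\theta|\le D(\theta)^{1/2}$ obtained by completing the square, the mean value theorem for the regularity estimate, and a case split according to whether $x\sim y$ or $x\ls |x-y|$ when estimating $\int_0^\pi(\sin\theta)^{2\lambda-1}D(\theta)^{-\lambda-\mu}\,d\theta$. The only difference is organizational: you package the two exponents $\mu=\tfrac12$ and $\mu=1$ into a single lemma and treat both partial derivatives explicitly, whereas the paper carries out the two integral computations separately in-line and writes out only the $x$-derivative, asserting the other is similar.
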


\begin{proof}
 Recall that
\begin{equation*}
\riz(x,y)=-\dfrac{2\lz}{\pi}\dint_0^\pi\dfrac{(x-y\cos\theta)(\sin\theta)^{2\lz-1}}
{(x^2+y^2-2xy\cos\theta)^{\lz+1}}\,d\theta.
\end{equation*}
We first verify \eqref{cz kernel condition-1}. Suppose $x,y\in\mathbb{R}_+$ with $x\not=y$. We now consider the following two cases.

Case 1, $x\le2|x-y|$. Note that  $|x-y\cos\theta|\leq |x^2+y^2-2xy\cos\theta|^{1\over2}$.
Combining the fact that
\begin{equation}\label{sin function integral}
\int_0^{\pi}(\sin\theta)^{2\lz-1}\,d\theta =2\int_0^{\pi\over2}(\sin\theta)^{2\lz-1}\,d\theta =\frac{\Gamma(\lz)\sqrt\pi}{\Gamma(\lz+1/2)}
\end{equation}
 and  the property \eqref{volume property-1} of the measure $dm_\lambda$, we obtain that
 $$
	 |\riz(x,y)|\ls \dint_0^\pi\dfrac{(\sin\theta)^{2\lz-1}}
	{|x-y|^{2\lz+1}}\,d\theta   \ls\dfrac1{|x-y|^{2\lz+1}}\sim \frac1{m_\lz(I(x, |x-y|))}.
$$

Case 2, $x>2|x-y|$. Note that in this case, $x/2\le y\le3x/2$.  Thus, by noting that $1-\cos \theta \geq 2\big( {\theta\over\pi} \big)^2$ for $\theta\in [0,\pi]$, we have
\begin{eqnarray*}
|\riz(x,y)|&\ls&\dint_0^{\pi}\dfrac{(\sin\theta)^{2\lz-1}}
{[(x-y)^2+2xy(1-\cos\theta)]^{\lz+{1\over2}}}\,d\theta \\
&\ls&\int_0^{\pi}\frac{\theta^{2\lz-1}}{[(x-y)^2+4xy\theta^2/\pi^2]^{\lz+{1\over2}}}\,d\theta
\\
&\ls&\frac1{|x-y|^{2\lz+1}}\frac{|x-y|^{2\lz}}{x^\lz y^\lz}\int_0^\fz\frac{\bz^{2\lz-1}}{[1+\bz^2]^{\lz+{1\over2}}}\,d\bz\\
&\ls&\frac1{|x-y|x^{2\lz}}\sim\frac1{m_\lz(I(x, |x-y|))}.
\end{eqnarray*}
Combining the estimates in Cases 1 and 2 we obtain \eqref{cz kernel condition-1}.

We turn to \eqref{cz kernel condition-2}. We point out that it suffices to prove that
when $|x_0-x|<|x_0-y|/2$,
\begin{equation}\label{cz kernel condition-3}
|\riz(x_0, y)-\riz(x,y)|\ls \frac{|x_0-x|}{|x_0-y|}\frac1{m_\lz(I(x_0, |x_0-y|))},
\end{equation}
then the estimate for $|\riz(y,x_0)-\riz(y,x)|$ follows similarly.

By the Mean Value Theorem, there exists $\xi:=tx_0+(1-t)x$ for some $t\in(0, 1)$ such that
$$\left|\riz(x_0, y)-\riz(x,y)\right|=|x_0-x||\partial_x\riz(\xi,y)|.$$
Observe that
\begin{eqnarray*}
|\partial_x\riz(\xi,y)|&\ls&\lf|\dint_0^\pi\dfrac{(\sin\theta)^{2\lz-1}}
{(\xi^2+y^2-2\xi y\cos\theta)^{\lz+1}}\,d\theta\r|+\lf|
\dint_0^\pi\dfrac{(\xi-y\cos\theta)^2(\sin\theta)^{2\lz-1}}
{(\xi^2+y^2-2\xi y\cos\theta)^{\lz+2}}\,d\theta\r|\\
&\ls&\dint_0^\pi\dfrac{(\sin\theta)^{2\lz-1}}
{(\xi^2+y^2-2\xi y\cos\theta)^{\lz+1}}\,d\theta\\
&\ls&\dint_0^{\pi/2}\dfrac{(\sin\theta)^{2\lz-1}}
{(\xi^2+y^2-2\xi y\cos\theta)^{\lz+1}}\,d\theta.
\end{eqnarray*}
To show \eqref{cz kernel condition-3}, it suffices to prove that
\begin{equation*}%\label{cz kernel estimate}
\dint_0^{\pi/2}\dfrac{(\sin\theta)^{2\lz-1}}
{(\xi^2+y^2-2\xi y\cos\theta)^{\lz+1}}\,d\theta\ls \frac1{|x_0-y|}\frac1{m_\lz(I(x_0, |x_0-y|))}.
\end{equation*}
To see this, we first point out that from the definition of $\xi$,
\begin{equation}\label{distance estimate}
\frac12|x_0-y|\le |\xi-y|\le \frac32|x_0-y|.
\end{equation}
Then, we consider the following two cases.

\noindent Case 1, $x_0\le 2|x_0-y|$. It follows that
$$\dint_0^{\pi/2}\dfrac{(\sin\theta)^{2\lz-1}}
{(\xi^2+y^2-2\xi y\cos\theta)^{\lz+1}}\,d\theta\ls \int_0^{\pi/2}\frac{\szlzm}{(\xi-y)^{2\lz+2}}\,d\theta\ls \frac1{(x_0-y)^{2\lz+2}};$$
Case 2, $x_0>2|x_0-y|$. Note that in this case $\xi\sim y\sim x_0$. By \eqref{distance estimate}, we also have that
\begin{eqnarray*}
\dint_0^{\pi/2}\dfrac{(\sin\theta)^{2\lz-1}}
{(\xi^2+y^2-2\xi y\cos\theta)^{\lz+1}}\,d\theta&\le&\dint_0^{\pi/2}\dfrac{\theta^{2\lz-1}}
{[(\xi-y)^2+\frac{4}{\pi^2}\xi y\theta^2]^{\lz+1}}\,d\theta\\
&\ls&\frac1{(\xi y)^\lz(\xi-y)^2}\int_0^{\fz}\frac{\bz^{2\lz-1}}{(1+\bz^2)^{\lz+1}}d\bz\\
&\sim&\frac1{x_0^{2\lz}}\frac1{(x_0-y)^2}.
\end{eqnarray*}
Combining the two cases above, we get that \eqref{cz kernel condition-3} holds.
\end{proof}

Next we recall the following estimates of the kernel $\riz(x, y)$ of the Riesz transform $\riz$, which
will be used in the sequel.  %Some of them were proved in \cite{bfbmt}, see also
%\cite[p.87]{ms}. %For the convenience of the reader, we present the proof here.

\begin{prop}
\label{p-estimate of riesz kernel}
The Riesz kernel $\riz(x,y)$ satisfies:
\begin{itemize}
  \item [i)] There exist $K_1>2$ large enough and a positive constant $C_{K_1,\,\lz}$ such that
  for any $x,\,y\in\mathbb{R}_+$ with $y>K_1x$,
  \begin{equation}\label{lower bound of riesz kernel}
  \riz(x, y)\ge C_{K_1,\,\lz}\frac x{y^{2\lz+2}}.
  \end{equation}
  \item [ii)]There exist $K_2\in(0, 1)$ small enough and a positive constant $C_{K_2,\,\lz}$ such that
  for any $x,\,y\in\mathbb{R}_+$ with $y<K_2x$,
  \begin{equation}\label{upper bound of riesz kernel}
  \riz(x, y)\le -C_{K_2,\,\lz}\frac1{x^{2\lz+1}}.
  \end{equation}
  \item [iii)] There exist $K_3\in(1/2,2)$ such that $|K_3-1|$ small enough and a positive constant $C_{K_3,\lz}$
  such that for any $x,\,y\in\mathbb{R}_+$ with $0<|1-y/x|<|K_3-1|$,
  \begin{equation*}\label{appro estimate of riesz kernel}
 \lf |\riz(x,y)+\frac1\pi\frac1{x^\lz y^\lz}\frac1{x-y}\r|\le C_{K_3,\,\lz}\frac1{x^{2\lz+1}}\lf(\log_+\frac{\sqrt{xy}}{|x-y|}+1\r).
  \end{equation*}
\end{itemize}
\end{prop}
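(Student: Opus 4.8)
\emph{Overview.} The plan is to treat parts (i)--(iii) separately, in each case exploiting the scaling structure of the explicit formula \eqref{riesz kernel}: factoring the appropriate power of $x$ or $y$ out of the numerator and denominator reduces every estimate to a one‑variable question about an elementary $\theta$‑integral. For part (i), putting $s=x/y\in(0,1)$ and pulling $y^{2}$ out of the denominator I would write $\riz(x,y)=y^{-(2\lz+1)}F(s)$, where
$$F(s):=-\frac{2\lz}{\pi}\int_0^\pi\frac{(s-\cos\theta)(\sin\theta)^{2\lz-1}}{(1+s^2-2s\cos\theta)^{\lz+1}}\,d\theta.$$
Since $1+s^2-2s\cos\theta=(1-s\cos\theta)^2+s^2\sin^2\theta\ge(1-s)^2$ stays bounded away from $0$ for $s\in[0,1/2]$, $F$ is smooth there and may be differentiated under the integral sign. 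The odd symmetry $\int_0^\pi\cos\theta(\sin\theta)^{2\lz-1}\,d\theta=0$ gives $F(0)=0$, while $\int_0^\pi(\sin\theta)^{2\lz-1}\,d\theta=B_\lz:=\Gamma(\lz)\sqrt{\pi}/\Gamma(\lz+1/2)$ together with $\int_0^\pi\cos^2\theta(\sin\theta)^{2\lz-1}\,d\theta=B_\lz/(2\lz+1)$ gives $F'(0)=\frac{2\lz B_\lz}{\pi(2\lz+1)}>0$. Hence $F(s)\ge\frac{F'(0)}{2}s$ for $0<s<1/K_1$ once $K_1>2$ is large enough, and scaling back produces \eqref{lower bound of riesz kernel}. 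The point to stress is \emph{why} a cheaper argument fails: the naive bound $|\riz(x,y)|\ls y^{-(2\lz+1)}$ is useless because the leading $y^{-(2\lz+1)}$ term cancels (exactly $F(0)=0$), and the genuine lower bound is of the strictly smaller size $x\,y^{-(2\lz+2)}$, carried by $F'(0)$.

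\emph{Part (ii).} The same scaling with $u=y/x\in(0,1)$ gives $\riz(x,y)=-x^{-(2\lz+1)}G(u)$ with
$$G(u):=\frac{2\lz}{\pi}\int_0^\pi\frac{(1-u\cos\theta)(\sin\theta)^{2\lz-1}}{(1+u^2-2u\cos\theta)^{\lz+1}}\,d\theta.$$
Here the numerator $1-u\cos\theta\ge1-u>0$ and the denominator $\ge(1-u)^2>0$ for $u\in[0,1/2]$, so dominated convergence shows $G$ is continuous on $[0,1/2]$ with $G(0)=\frac{2\lz}{\pi}B_\lz>0$. Choosing $K_2\in(0,1)$ small enough that $G(u)\ge G(0)/2$ for $0\le u<K_2$ immediately gives \eqref{upper bound of riesz kernel}; this part is essentially free once the bookkeeping of (i) is in place.

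\emph{Part (iii), main term.} Now the hypothesis forces $y/x$ close to $1$, hence $x\sim y\sim\sqrt{xy}$ and $|x-y|\ls x$, and the singularity of \eqref{riesz kernel} is concentrated near $\theta=0$. I would fix a small absolute constant $\delta$ and split $\int_0^\pi=\int_0^\delta+\int_\delta^\pi$. On $(\delta,\pi)$ the denominator is $\gs xy\sim x^2$ and the numerator is $\ls x$, so that piece is $O(x^{-(2\lz+1)})$. On $(0,\delta)$ I would use $x^2+y^2-2xy\cos\theta=(x-y)^2+2xy(1-\cos\theta)$, the elementary expansions $1-\cos\theta=\tfrac12\theta^2(1+O(\theta^2))$ and $\sin\theta=\theta(1+O(\theta^2))$, and the splitting $x-y\cos\theta=(x-y)+y(1-\cos\theta)$, to reduce the integrand to $(x-y)\,\theta^{2\lz-1}\big((x-y)^2+xy\theta^2\big)^{-(\lz+1)}$ plus error terms. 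The substitution $\theta=\tfrac{|x-y|}{\sqrt{xy}}t$ converts the main term into
$$-\frac{2\lz}{\pi}\,\frac{x-y}{(xy)^\lz(x-y)^2}\int_0^{M}\frac{t^{2\lz-1}}{(1+t^2)^{\lz+1}}\,dt,\qquad M:=\frac{\delta\sqrt{xy}}{|x-y|};$$
since $\int_0^\infty\frac{t^{2\lz-1}}{(1+t^2)^{\lz+1}}\,dt=\frac1{2\lz}$ and the tail $\int_M^\infty$ is $O(M^{-2})$, this reproduces exactly $-\dfrac{1}{\pi x^\lz y^\lz(x-y)}$ modulo an error $O(x^{-(2\lz+1)})$.

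\emph{Part (iii), errors and the main obstacle.} The remaining pieces on $(0,\delta)$ — the $O(\theta^2)$ corrections from the denominator and from $(\sin\theta)^{2\lz-1}$, and the $y(1-\cos\theta)$ part of the numerator — all collapse, after the same substitution, to (a constant times) $\dfrac{x}{(xy)^{\lz+1}}\int_0^M\frac{t^{2\lz+1}}{(1+t^2)^{\lz+1}}\,dt$ (with $|x-y|$ in place of $x$ in some of them, which is only smaller); because $t^{2\lz+1}(1+t^2)^{-(\lz+1)}$ behaves like $1/t$ at infinity, this last integral is $\ls 1+\log_+M$, and using $x\sim y$ this is precisely $x^{-(2\lz+1)}\big(\log_+\tfrac{\sqrt{xy}}{|x-y|}+1\big)$. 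Putting the pieces together gives the claimed estimate. I expect (iii) to be the main obstacle: one must track several error terms simultaneously and verify both that the principal part emerges with the exact constant $-1/\pi$ and the correct sign, and that the logarithm — produced precisely by the borderline integral $\int^M t^{2\lz+1}(1+t^2)^{-(\lz+1)}\,dt\sim\log M$ — appears with the right power of $x$; parts (i) and (ii), by contrast, are short once the scaling reduction is made.
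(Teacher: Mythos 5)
Your proposal is correct. For part (iii) it is essentially the paper's own argument (Case (c) of its proof): isolate the singularity at $\theta=0$, Taylor-expand $\sin\theta$, $1-\cos\theta$ and the denominator, and rescale $\theta=\frac{|x-y|}{\sqrt{xy}}\,t$ so that $\int_0^\infty t^{2\lambda-1}(1+t^2)^{-\lambda-1}\,dt=\frac1{2\lambda}$ produces the exact constant $-\frac1\pi$ while the borderline integrals $\int^M t^{2\lambda+1}(1+t^2)^{-\lambda-1}\,dt$ produce the logarithm; cutting the $\theta$-integral at a small $\delta$ rather than at $\pi/2$ is immaterial. (One small point worth recording: when $M=\delta\sqrt{xy}/|x-y|\lesssim 1$ the tail bound $O(M^{-2})$ is not small, but then $|x-y|\gtrsim\sqrt{xy}$ and the whole principal term $\frac1{(xy)^{\lambda}|x-y|}$ is already $O(x^{-2\lambda-1})$, so the estimate still closes.) For parts (i) and (ii) your route is genuinely different in execution, though not in spirit: the paper splits the normalized integral into $\mathrm{I}_1-\mathrm{I}_2$, bounds each piece by explicit Gamma-function constants via the Mean Value Theorem, and wins because the ratio of those constants is $\frac{\lambda+1}{\lambda+1/2}>1$; you package exactly the same cancellation as $F(0)=0$ together with $F'(0)=\frac{2\lambda}{\pi(2\lambda+1)}\frac{\Gamma(\lambda)\sqrt{\pi}}{\Gamma(\lambda+1/2)}>0$ (note $\frac{\lambda+1}{\lambda+1/2}-1=\frac1{2\lambda+1}$, so your $F'(0)$ is precisely the paper's margin $a_1-1$ pushed to its limit) plus continuity of $F'$ on $[0,1/2]$. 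Your version is shorter and makes transparent why the bound in (i) must be of the smaller order $x\,y^{-2\lambda-2}$ rather than $y^{-2\lambda-1}$; the paper's more explicit computation has the advantage of exhibiting concrete admissible values of $K_1$ and $K_2$, which are then used quantitatively later (the choice $K_0>\max\{1/K_2,\,1/\tilde{K}_3\}+1$ in Proposition \ref{t-H1 appro}), though a quantitative modulus of continuity for $F'$ would recover that from your argument as well.
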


We point out that an earlier version of these three properties can be deduced from \cite[p.711]{bfbmt}, see also \cite[p.207]{bdt}.
See also the first version of these three estimates in \cite[p.87]{ms}.
However, to obtain our main results in Theorems \ref{t-equiv character H1} and \ref{t-commutator character BMO},
we provide the current version of these kernel estimates with the specific constants $K_1$, $K_2$ and $K_3$, and give the details of proof.

Moreover, from (iii) in Proposition \ref{p-estimate of riesz kernel} we can deduce the following inequality which will be used in the sequel.
\begin{rem}\label{r lower bound}
There exist $\tilde{K}_3\in(0, 1/2)$  small enough and a positive constant $C_{\tilde{K}_3,\lz}$
such that for any $x,\,y\in\mathbb{R}_+$ with $0<y/x-1<\tilde{K}_3$,
\begin{eqnarray*}
\riz(x,y)&\geq& {1\over \pi} {1\over x^\lambda y^\lambda}{1\over y-x} - C_{K_3,\lz}\frac1{x^{2\lz+1}}\Big( 1 + \log_+{\sqrt{xy}\over |x-y|} \Big)\\
&\geq& C_{\tilde{K}_3,\lz}{1\over x^\lambda y^\lambda}{1\over y-x}.
\end{eqnarray*}

\end{rem}

\begin{proof}[Proof of Proposition \ref{p-estimate of riesz kernel}]
For any fixed $x, y\in \mathbb{R}_+$ with $x\not=y$, write $y=kx$. Then
$k\in ((0, 1)\cup(1,\fz))$. By \eqref{riesz kernel}, we denote that $\riz(x,kx)=\frac{-2\lz}{\pi}\frac1{x^{2\lz+1}}{\rm I}$, where
\begin{equation*}
{\rm I}=\dint_0^\pi\dfrac{(1-k\cos\theta)(\sin\theta)^{2\lz-1}}
{(1+k^2-2k\cos\theta)^{\lz+1}}\,d\theta.
\end{equation*}
We estimate $\rm I$ by considering the following three cases.

Case (a) $k>2$. In this case, we write
\begin{eqnarray*}
{\rm I}&=&\int_0^{\pi}\frac\szlzm{(1+k^2-2k\cos\theta)^{\lz+1}}\,d\theta\\
&\quad&+\lf[\left(\int_0^{\pi/2}+\int_{\pi/2}^{\pi}\right)\frac{-k\cos\theta}{(1+k^2-2k\cos\theta)^{\lz+1}}\szlzmx\r]\\
&=&\int_0^{\pi}\frac\szlzm{(1+k^2-2k\cos\theta)^{\lz+1}}\,d\theta\\
&\quad&-k\int_0^{\pi/2}\lf[\frac1{(1+k^2-2k\cos\theta)^{\lz+1}}-\frac1{(1+k^2+2k\cos\theta)^{\lz+1}}\r]\cos\theta\szlzmx\\
&=:&{\rm I}_1-{\rm I}_2.
\end{eqnarray*}

As $k>2$, from \eqref{sin function integral},  we deduce that
$${\rm I}_1\le \int_0^\pi\frac\szlzm{(k-1)^{2\lz+2}}\,d\theta=\frac{\Gamma(\lz)\sqrt\pi}{\Gamma(\lz+1/2)}\frac1{(k-1)^{2\lz+2}}.$$

On the other hand, by the Mean Value Theorem, there exists $t\in(-1, 1)$ depending on $\theta$ and $\lz$, such that
\begin{eqnarray*}
{\rm I}_2&=&4k^2(\lz+1)\int_0^{\pi/2}\frac{(\cos\theta)^2}{(1+k^2-2tk\cos\theta)^{\lz+2}}\szlzmx\\
&\ge&4k^2(\lz+1)\int_0^{\pi/2}\frac{\szlzm-\szlzp}{(1+k^2+2k)^{\lz+2}}\,d\theta\\
&=&\frac{\Gamma(\lz)\sqrt\pi}{\Gamma(\lz+1/2)}\frac{k^2}{(k+1)^{2\lz+4}}\frac{\lz+1}{\lz+1/2},
\end{eqnarray*}
where the last equality follows from the second equality in \eqref{sin function integral}.
Let $a_1\in\left(1, \frac{\lz+1}{\lz+\frac{1}{2}}\right)$. Observe that there exists $K_1$ such that when $k>K_1$,
$$\frac{k^2}{(k+1)^{2\lz+4}}\frac{\lz+1}{\lz+1/2}>a_1\frac1{(k-1)^{2\lz+2}}.$$
Thus when $k>K_1$,
$$\riz(x,kx)=\frac{2\lz}{\pi}\frac1{x^{2\lz+1}}({\rm I}_2-{\rm I}_1)\ge
\frac{2\lz}{\pi}\frac1{x^{2\lz+1}}\frac{\Gamma(\lz)\sqrt\pi}{\Gamma(\lz+1/2)}\frac{a_1-1}{(k-1)^{2\lz+2}}\gs\frac x{(kx)^{2\lz+2}}.$$
This implies \eqref{lower bound of riesz kernel}.

Case (b) $k\in(0, 1)$. Similar to the argument in Case (a), we have that
$${\rm I}_1\ge\frac{\Gamma(\lz)\sqrt\pi}{\Gamma(\lz+1/2)}\frac1{(k+1)^{2\lz+2}}$$
and
$${\rm I}_2\le\frac{\Gamma(\lz)\sqrt\pi}{\Gamma(\lz+1/2)}\frac{k^2}{(k-1)^{2\lz+4}}\frac{\lz+1}{\lz+1/2}.$$
Then for some fixed $a_2\in(0, 1)$, there exists $K_2\in(0, 1)$ such that when $0<k<K_2$,
$$\frac{k^2}{(k-1)^{2\lz+4}}\frac{\lz+1}{\lz+1/2}<a_2\frac1{(k+1)^{2\lz+2}}.$$
Thus when $0<k<K_2$, there exists $C_{K_2,\,\lz}$ such that
$$\riz(x,kx)=\frac{2\lz}{\pi}\frac1{x^{2\lz+1}}({\rm I}_2-{\rm I}_1)\le
\frac{2\lz}{\pi}\frac1{x^{2\lz+1}}\frac{\Gamma(\lz)\sqrt\pi}{\Gamma(\lz+1/2)}\frac{a_2-1}{(k+1)^{2\lz+2}}\le -C_{K_2,\,\lz}\frac1{x^{2\lz+1}}.$$
This implies \eqref{upper bound of riesz kernel}.

Case (c) $k\in(1/2, 2)$. In this case, we write
$${\rm I}=\left(\int_0^{\pi/2}+\int_{\pi/2}^\pi\right)\frac{(1-k)+k(1-\cos\theta)}{(1+k^2-2k\cos\theta)^{\lz+1}}\szlzm\,d\theta=: {\rm J}_1+{\rm J}_2.$$
 To estimate ${\rm J}_1$, using Taylor's Theorem and the Mean Value Theorem, we see that for $\theta\in[0, \pi/2]$, there
exists $t_1,\,t_2,\,t_3\in (0, 1)$ such that
\begin{equation}\label{sin taylor-exp}
\szlzm=\theta^{2\lz-1}-\frac{t_1(2\lz-1)}6\theta^{2\lz+1},
\end{equation}
\begin{equation}\label{cos taylor-exp}
1-\cos\theta=\frac{\theta^2}2-\frac{t_2}{4!}\theta^4,
\end{equation}
and
\begin{eqnarray}\label{frac mean value thm}
&&\lf[(1-k)^2+k\theta^2-\frac{kt_2}{12}\theta^4\r]^{-\lz-1}\nonumber\\
&&\quad=[(1-k)^2+k\theta^2]^{-\lz-1}
+\frac{kt_2(\lz+1)}{12}\theta^4\lf[(1-k)^2+k\theta^2-\frac{kt_3}{12}\theta^4\r]^{-\lz-2}.
\end{eqnarray}
From \eqref{sin taylor-exp}, \eqref{cos taylor-exp} and \eqref{frac mean value thm}, it follows that
\begin{eqnarray*}
{\rm J}_1&=&\int_0^{\pi/2}\frac{[(1-k)+(\frac k2\theta^2-\frac{t_2k}{4!}\theta^4)](\theta^{2\lz-1}-\frac{t_1(2\lz-1)}6\theta^{2\lz+1})}
{[(1-k)^2+k\theta^2-\frac{kt_2}{12}\theta^4]^{\lz+1}}\,d\theta\\
&=&\int_0^{\pi/2}\frac{(1-k)\theta^{2\lz-1}}{[(1-k)^2+k\theta^2]^{\lz+1}}\,d\theta\\
&\quad&+\int_0^{\pi/2}\frac{(\frac{k}{2}-\frac{2\lz-1}{6}(1-k)t_1)\theta^{2\lz+1}
-(\frac{2\lz-1}{12}t_1k+\frac{t_2k}{24})\theta^{2\lz+3}+\frac{2\lz-1}{144}t_1t_2k\theta^{2\lz+5}}{[(1-k)^2+k\theta^2]^{\lz+1}}\,d\theta\\
&\quad&+\frac{k}{12}(\lz+1)\int_0^{\pi/2}t_2\lf\{(1-k)\theta^{2\lz+3}+\lf[\frac{k}{2}-\frac{2\lz-1}{6}(1-k)t_1\r]\theta^{2\lz+5}\r.\\
&\quad&\lf.-\lf(\frac{t_1k(2\lz-1)}{12}+\frac{t_2k}{24}\r)\theta^{2\lz+7}
+\frac{(2\lz-1)t_1t_2}{144}k\theta^{2\lz+9}\r\}\\
&\quad&\times\frac1{[(1-k)^2+k\theta^2-\frac{kt_3}{12}\theta^4]^{\lz+2}}\,d\theta\\
&=:&{\rm J}_{11}+{\rm J}_{12}+{\rm J}_{13}.
\end{eqnarray*}

Observe that
\begin{equation*}\label{beta fun}
\inzf\frac{\beta^{2\lz-1}}{(1+\beta^2)^{\lz+1}}\,d\bz=\frac12B(\lz,1)=\frac1{2\lz},
\end{equation*}
where $B(p,q)$ is the Beta function. Then we have that
\begin{eqnarray*}
{\rm J}_{11}&=&\frac{1-k}{|1-k|^{2\lz+2}}\int_0^{\pi/2}\frac{\theta^{2\lz-1}}{[1+(\frac{\sqrt k}{|1-k|}\theta)^2]^{\lz+1}}\,d\theta\\
&=&\frac1{k^\lz}\frac1{1-k}\lf[\frac1{2\lz}-\int_{\frac\pi2\frac{\sqrt k}{|1-k|}}^\fz\frac{\bz^{2\lz-1}}{(1+\bz^2)^{\lz+1}}\,d\bz\r].
\end{eqnarray*}
By this and the fact that
$$0<\int_{\frac\pi2\frac{\sqrt k}{|1-k|}}^\fz\frac{\bz^{2\lz-1}}{(1+\bz^2)^{\lz+1}}\,d\bz<\frac{2}{\pi^2}\frac{(k-1)^2}{k},$$
we see that ${\rm J}_{11}-\frac1{2\lz}\frac1{k^\lz}\frac1{1-k}\to0$, $k\to1$.

Similarly, we have that
\begin{eqnarray*}
|{\rm J}_{12}|&\ls& \int_0^{\pi/2}\frac{\theta^{2\lz+1}+\theta^{2\lz+3}
+\theta^{2\lz+5}}{[(1-k)^2+k\theta^2]^{\lz+1}}\,d\theta\\
&\ls& \frac1{(1-k)^{2\lz+2}}\int_0^{\pi/2}\frac{\theta^{2\lz+1}}{[1+(\frac{\sqrt k}{|k-1|}\theta)^2]^{\lz+1}}\,d\theta
+\int_0^{\pi/2}\frac{\theta^{2\lz+3}
+\theta^{2\lz+5}}{[(1-k)^2+k\theta^2]^{\lz+1}}\,d\theta\\
&\ls&\int_0^{\frac{ \pi}{2}\frac{\sqrt k}{|k-1|}}\frac{\bz^{2\lz+1}}{(1+\bz^2)^{\lz+1}}\,d\bz
+\int_0^{\pi/2}(\theta+\theta^3)\,d\theta\\
&\ls& \int_0^1\bz^{2\lz+1}\,d\bz+\int_1^{\frac{\sqrt k}{|k-1|}}\bz^{-1}\,d\bz+1\\
&\ls&\log_+\frac{\sqrt k}{|k-1|}+1,
\end{eqnarray*}
and
\begin{eqnarray*}
|{\rm J}_{13}|&\ls&\int_0^{\pi/2}\frac{\theta^{2\lz+3}+\theta^{2\lz+5}
+\theta^{2\lz+7}+\theta^{2\lz+9}}{[(1-k)^2+k\theta^2/4]^{\lz+2}}\,d\theta\\
&\ls& \frac1{(1-k)^{2\lz+4}}\int_0^{\pi/2}\frac{\theta^{2\lz+3}}{[1+(\frac{\sqrt k}{2|k-1|}\theta)^2]^{\lz+2}}\,d\theta+1
\ls\log_+\frac{\sqrt k}{|k-1|}+1.
\end{eqnarray*}

For the estimate of ${\rm J}_2$, since  $\cos\theta\le0$ when $\theta\in[\pi/2,\pi]$, it is easy to see that
${\rm J}_2\ls1$. Combining the estimates of ${\rm J}_{11}$, ${\rm J}_{12}$, ${\rm J}_{13}$ and ${\rm J}_2$,
we finish the proof of Proposition \ref{p-estimate of riesz kernel}.
\end{proof}

%\medskip
%Recently, following a different procedure used in \cite{ms}, Betancor et al. \cite{bdt} obtained the $L^p$ boundedness of the
%Riesz transform
%\begin{equation*}%\label{riesz}
%R_{S_\lz}f := A_\lambda (S_\lz)^{-1/2}f, \quad  {\rm where}\ A_\lambda= x^\lambda \partial_x x^{-\lambda}
%\end{equation*}
%associated with the other Bessel operator defined as follows,  for all $C^2$-functions $f$ on $\mathbb{R}_+$
%and $x\in \rrp$,
%\begin{equation}\label{bessel 2}
%S_\lambda f(x):=-\frac{d^2}{dx^2}f(x)+ \frac{\lz^2-\lz}{x^2}f(x).
%\end{equation}

We now recall the Hardy and BMO spaces associated with $S_\lz$.
Betancor et al. \cite{bdt}  introduced an equivalent definition of the Hardy spaces  $H^1_{S_\lambda}(\mathbb{R}_+,dx)$  associated with $S_\lz$ via the maximal function via Poisson semigroups, i.e.
\begin{equation*} \label{Hardy m for bessel 2}
H^1_{S_\lambda}(\mathbb{R}_+,\,dx):=\Big\{ f\in L^1(\mathbb{R}_+,\,dx): \sup_{t>0}|e^{-t\sqrt{S_\lambda}}(f)|\in L^1(\mathbb{R}_+,\,dx) \Big\}
\end{equation*}
with norm $$\|f\|_{H^1_{S_\lambda}(\mathbb{R}_+,\,dx)}= \|f\|_{L^1(\mathbb{R}_+,\,dx)} + \|\sup_{t>0}|e^{-t\sqrt{S_\lambda}}(f)|\|_{L^1(\mathbb{R}_+,\,dx)}.$$
Moreover, they proved that $H^1_{S_\lambda}(\mathbb{R}_+,dx)$ is equivalent to the Hardy space $H_o^1(\mathbb{R}_+,dx)$ (see Theorem 3.1 and Proposition 3.9 in \cite{bdt}), where $H_o^1(\mathbb{R}_+,dx)$ is defined in \cite{CKS} as follows
$$H_o^1(\mathbb{R}_+,dx)= \{ f\in L^1(\mathbb{R}_+,dx):  f_o \in H^1(\mathbb{R})\}$$
with the norm defined by $\|f\|_{H_o^1(\mathbb{R}_+,\,dx)}:=\|f_o\|_{H^1(\mathbb{R})}$.  Here $f_o(x):=f(x) $ if $x\in \mathbb{R}_+$, $f_o(x):=-f(-x) $ if $x\in \mathbb{R}_-$, which is also called the odd extension of $f$ on $\mathbb{R}_+$. We point out that the atoms in $H_o^1(\mathbb{R}_+,dx)$ may not have cancellation property.
It follows from Chang et al. \cite{CKS} that the dual space of $H_o^1(\mathbb{R}_+,dx)$ is BMO$_z(\mathbb{R},dx)$. We further point out that as proved in \cite[Proposition 3.1]{DDSY}, this BMO$_z(\mathbb{R},dx)$ is equivalent to BMO$_o(\mathbb{R}_+,dx)$, which is defined as
\begin{equation}\label{BMOo}
{\rm BMO}_o(\mathbb{R}_+,dx) = \{f\in L^1_{\rm loc}(\mathbb{R}_+,\,dx): f_o \in {\rm BMO}(\mathbb{R})\}.
\end{equation}
where ${\rm BMO}(\mathbb{R})$ is the standard BMO space on $\mathbb{R}$ introduced by John--Nirenberg. Thus, we have that
\begin{equation}\label{BMOoo}
{\rm BMO}_{S_\lambda}(\mathbb{R}_+,dx) = {\rm BMO}_o(\mathbb{R}_+,dx).
\end{equation}

We finally note that the Riesz transform $R_{S_\lambda}$  related to Bessel operator $S_\lambda$ (defined as in \eqref{bessel 2})
is bounded on $L^2(\mathbb{R}_+,dx)$, and the kernel $R_{S_\lambda}(x,y)$ of $R_{S_\lambda}$
satisfies the following size and regularity properties, as proved in \cite[Proposition 4.1]{bfbmt}.
\begin{prop}[\cite{bfbmt}]\label{p upper}
There exists $C>0$ such that for every $x,y\in \mathbb{R}_+$ with $x\not=y$,
\begin{eqnarray*}
&& (i)\  |R_{S_\lambda}(x,y)|\leq {C\over |x-y|};\\
&& (ii)\  \Big|{\partial\over \partial  x} R_{S_\lambda}(x,y)\Big| + \Big|{\partial\over \partial y} R_{S_\lambda}(x,y)\Big| \leq {C\over |x-y|^2}.\\
\end{eqnarray*}
\end{prop}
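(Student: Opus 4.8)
The plan is to exploit the explicit link between the two Bessel settings established in \cite{bfbmt}: the Riesz transform $R_{S_\lambda}$ differs from a suitably conjugated version of $R_{\Delta_\lambda}$ only by multiplication by the smooth weight $x^{-\lambda}$ on one side and $y^{\lambda}$ on the other. Concretely, since $S_\lambda = x^\lambda \Delta_\lambda x^{-\lambda}$ as operators (which is the very reason $A_\lambda = x^\lambda \partial_x x^{-\lambda}$ plays the role of $\partial_x$), one has at the level of kernels the identity
\[
R_{S_\lambda}(x,y) = x^\lambda\, \riz(x,y)\, y^{-\lambda} \cdot \frac{1}{y^{2\lambda}}\cdot y^{2\lambda} = x^\lambda\, y^{\lambda}\,\riz(x,y)
\]
up to the bookkeeping of the measures $dm_\lambda$ versus $dx$ (the factor $y^{2\lambda}$ absorbing the change of measure). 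First I would write this relation carefully, using the definitions $R_{S_\lambda}f = A_\lambda(S_\lambda)^{-1/2}f$ and $\riz f = \partial_x(\Delta_\lambda)^{-1/2}f$ together with the intertwining $(S_\lambda)^{-1/2} = x^\lambda (\Delta_\lambda)^{-1/2} x^{-\lambda}$, to land on an exact formula for $R_{S_\lambda}(x,y)$ in terms of $\riz(x,y)$.

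Granting that identity, part (i) is immediate: by \eqref{cz kernel condition-1} in Proposition \ref{t:RieszCZ} together with the volume comparison \eqref{volume property-1}, we have $|\riz(x,y)| \lesssim 1/(x^{2\lambda}|x-y| + |x-y|^{2\lambda+1})$, and multiplying by $x^\lambda y^\lambda$ and examining the two regimes $x \sim y$ (where $x^\lambda y^\lambda \sim x^{2\lambda}$ and the first term in the denominator dominates, giving $\lesssim 1/|x-y|$) and, say, $y \geq 2x$ or $y \leq x/2$ (where $x^\lambda y^\lambda \lesssim (\max\{x,y\})^{2\lambda} \lesssim |x-y|^{2\lambda}$ and the second denominator term dominates, again giving $\lesssim 1/|x-y|$) yields the bound $C/|x-y|$. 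For part (ii), I would differentiate the kernel identity: $\partial_x R_{S_\lambda}(x,y) = \lambda x^{\lambda-1} y^\lambda \riz(x,y) + x^\lambda y^\lambda \partial_x \riz(x,y)$. The second term is controlled using the gradient bound $|\partial_x \riz(\xi,y)| \lesssim \frac{1}{|x-y|}\frac{1}{m_\lambda(I(x,|x-y|))}$ established inside the proof of Proposition \ref{t:RieszCZ} (this is exactly inequality \eqref{cz kernel condition-3} before integrating out the mean-value point), which after multiplying by $x^\lambda y^\lambda$ and the same two-regime analysis gives $\lesssim 1/|x-y|^2$; the first term is handled by the size bound for $\riz$ already used in part (i), noting $x^{\lambda-1}y^\lambda |\riz(x,y)| \lesssim \frac{1}{x}\cdot x^\lambda y^\lambda |\riz(x,y)| \lesssim \frac{1}{x|x-y|}$, and since in the regularity regime one may assume $|x-y| \lesssim x$ (for $|x-y| \gtrsim x$ the bound $1/|x-y|^2$ follows directly from part (i) and $x^{\lambda-1}y^\lambda \lesssim |x-y|^{2\lambda-1}$), this is $\lesssim 1/|x-y|^2$. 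The estimate for $\partial_y R_{S_\lambda}(x,y)$ is symmetric, using the $y$-regularity half of \eqref{cz kernel condition-2}.

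The main obstacle I anticipate is not any single estimate but getting the intertwining identity between $R_{S_\lambda}$ and $\riz$ precisely right, including all the weight and measure factors — it is easy to be off by a power of $x$ or $y$ or to mishandle the fact that $\riz$ acts on $L^2(\mathbb{R}_+, dm_\lambda)$ while $R_{S_\lambda}$ acts on $L^2(\mathbb{R}_+, dx)$. Since the proposition is quoted verbatim from \cite[Proposition 4.1]{bfbmt}, an acceptable alternative — and the one I would actually present if the intertwining computation turns out to be delicate — is simply to cite that reference and remark that the bounds also follow from Proposition \ref{t:RieszCZ} via the kernel relation above, relegating the detailed conjugation identity to the cited work.
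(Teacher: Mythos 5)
The paper offers no proof of this proposition at all --- it is quoted verbatim from \cite[Proposition 4.1]{bfbmt} --- so your attempt to derive it from the $\Delta_\lambda$-estimates is necessarily a different route from the paper's. Your intertwining identity is correct: since $S_\lambda=x^\lambda\Delta_\lambda x^{-\lambda}$ one has $R_{S_\lambda}=x^\lambda\,\riz\, x^{-\lambda}$, and after accounting for the change of reference measure from $dm_\lambda$ to $dx$ this gives the kernel relation $R_{S_\lambda}(x,y)=(xy)^\lambda\,\riz(x,y)$. With that in hand your proof of (i) is complete: in the regime $x\sim y$ one uses the term $x^{2\lambda}|x-y|$ in $m_\lambda(I(x,|x-y|))$, and in the regime $\max\{x,y\}\sim|x-y|$ one uses the term $|x-y|^{2\lambda+1}$, exactly as you say.

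Part (ii), however, has a genuine gap in the product-rule term $\lambda x^{\lambda-1}y^\lambda\riz(x,y)$ and its mirror $\lambda x^\lambda y^{\lambda-1}\riz(x,y)$ for $\partial_y$. In the regime $|x-y|\gtrsim x$ (so $y\geq 2x$ and $|x-y|\sim y$) you invoke $x^{\lambda-1}y^\lambda\lesssim |x-y|^{2\lambda-1}$, which amounts to $x^{\lambda-1}\lesssim y^{\lambda-1}$; for $0<\lambda<1$ the exponent is negative and this fails badly as $x\to0$. The size bound of Proposition \ref{t:RieszCZ} only yields $x^{\lambda-1}y^\lambda|\riz(x,y)|\lesssim x^{\lambda-1}y^{-\lambda-1}$ there, which is not $O(|x-y|^{-2})$. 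To salvage the $\partial_x$ estimate you would need the extra vanishing $|\riz(x,y)|\lesssim x/y^{2\lambda+2}$ for $x\ll y$ --- the upper-bound companion of \eqref{lower bound of riesz kernel}, coming from the cancellation $\int_0^\pi\cos\theta(\sin\theta)^{2\lambda-1}\,d\theta=0$ --- and this is not contained in Proposition \ref{t:RieszCZ}. For the $\partial_y$ term no such rescue is available: by \eqref{upper bound of riesz kernel}, $|\riz(x,y)|\gtrsim x^{-2\lambda-1}$ for $y<K_2x$, so $\lambda x^\lambda y^{\lambda-1}|\riz(x,y)|\gtrsim y^{\lambda-1}x^{-\lambda-1}$, which for $0<\lambda<1$ blows up as $y\to0$ while $|x-y|^{-2}$ stays bounded, and the other product-rule term is $O(y^\lambda)$ and cannot cancel it. So your argument cannot produce the stated pointwise bound for all $x\neq y$ when $\lambda<1$; at best it gives (ii) in the local region $x/2<y<2x$ (which is all the Calder\'on--Zygmund machinery of Section \ref{s:MainResult 2} actually needs), and it in fact casts doubt on the global pointwise form of (ii) as quoted. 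Your fallback --- simply citing \cite[Proposition 4.1]{bfbmt} --- is therefore the right course here, and it is exactly what the paper does.
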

%Moreover,  we also have the lower bound for the kernel $R_{S_\lambda}(x,y)$,  as proved in \cite[p. 711]{bfbmt}.

%\begin{prop}[\cite{bfbmt}]\label{p lower}
%The kernel $R_{S_\lambda}(x,y)$ satisfies the following lower bound:
% \begin{itemize}
% \item[(i)]There exist $\tilde{K}_1<1/2$ large enough and a positive constant $C_{\tilde{K}_1}$ such that
%  for any $x,\,y\in\mathbb{R}_+$ with $y<\tilde{K}_1x$,
%  \begin{equation}\label{lower bound of riesz kernel 3}
%    R_{S_\lambda}(x, y)\le - C_{\tilde{K}_1}\frac{x^{\lz}}{y^{\lz+1}}.
%  \end{equation}
%  \item[(ii)]  There exist $\tilde{K}_2\in(1/2,2)$ such that $|\tilde{K}_2-1|$ small enough and a positive constant $C_{\tilde{K}_2}$
%  such that for any $x,\,y\in\mathbb{R}_+$ with $0<|1-y/x|<|\tilde{K}_2-1|$,
%  \begin{equation}\label{lower bound of riesz kernel 4}
%      |R_{S_\lambda}(x,y)|\geq  C_{\tilde{K}_2}{1\over |x-y|}.
%  \end{equation}
%  \end{itemize}
%\end{prop}

\section{Hardy space factorization and BMO space characterization in the setting of~$\Delta_{\lambda}$}
%\section{Proof of Theorems \ref{t-equiv character H1} and \ref{t-commutator character BMO}}
\label{s:MainResult}

In this section we provide the details of the proof of Theorems \ref{t-equiv character H1} and \ref{t-commutator character BMO}, in the following
structure: we first provide the proof of (1) in Theorem \ref{t-commutator character BMO}, which plays the key role for
the proof of Theorem \ref{t-equiv character H1}. Then the proof of (2) in Theorem \ref{t-commutator character BMO} follows from
Theorem \ref{t-equiv character H1}.

\begin{proof}[Proof of (1) of Theorem \ref{t-commutator character BMO}]
We first prove the upper bound, i.e.,
for $b\in \bmoz$ and $p\in(1, \fz)$, there exists a positive constant $C$
such that for any $f\in \lpz$,
\begin{equation}\label{upper bd}
\lf\|[b, \riz]f\r\|_\lpz\le C\|f\|_\lpz.
\end{equation}

Note that \eqref{upper bd} follows from
 \cite[Theorem 2.5]{bc} since
the Riesz transform $\riz$ is a Calder\'on--Zygmund operator as indicated in Proposition \ref{t:RieszCZ}.
\end{proof}

We now prove Theorem \ref{t-equiv character H1} based on (1) of Theorem \ref{t-commutator character BMO}. To begin with, we now provide an auxiliary lemma for the Hardy space $\hoz$ associated with
the Bessel operator, which plays an important role in the
proof of our main result.

\begin{lem}\label{l-atomic estimate}
Let $f$ be a function satisfying the following estimates:
\begin{itemize}
  \item [i)]   $\inzf f(x)\xtz=0$;
  \item [ ii)] there exist intervals $I(x_1, r)$ and $I(x_2, r)$ for some $x_1, x_2, r\in\mathbb{R}_+$ and positive constants
  $D_1,\,D_2$ such that
  $$|f(x)|\le D_1\chi_{I(x_1, \,r)}(x)+D_2\chi_{I(x_2,\,r)}(x);$$
  \item [iii)]$|x_1-x_2|\ge4r$.
\end{itemize}
Then there exists a positive constant $C$ independent of $x_1,x_2,r,D_1,D_2$, such that
$$\|f\|_\hoz\le C\log_2\frac{|x_1-x_2|}r\left[D_1m_\lz(x_1,r)+D_2m_\lz(x_2,r)\right].$$

\end{lem}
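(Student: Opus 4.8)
The plan is to exhibit $f$ as a summable combination of $(1,\fz)_{\Delta_\lz}$-atoms (in the sense of Definition \ref{d-atomic H1}) with controlled coefficients and then invoke the atomic characterisation of $\hoz$ from \cite{bdt}. The only obstruction is that $f$ is supported on the two \emph{disjoint} intervals $I_1:=I(x_1,r)$ and $I_2:=I(x_2,r)$ (disjoint by (iii)), so it is not itself an atom; this will be repaired by a telescoping/chain argument.

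First I would split off the ``local'' parts. By (ii) and (iii), $f=f\chi_{I_1}+f\chi_{I_2}$ with $|f\chi_{I_i}|\le D_i\chi_{I_i}$. Put $\bz:=\int_{I_1}f\,\dmz$; then (i) gives $\int_{I_2}f\,\dmz=-\bz$, and $|\bz|\le D_1m_\lz(I_1)$ and $|\bz|\le D_2m_\lz(I_2)$. Write
\[
f=\Big(f\chi_{I_1}-\tfrac{\bz}{m_\lz(I_1)}\chi_{I_1}\Big)+\Big(f\chi_{I_2}+\tfrac{\bz}{m_\lz(I_2)}\chi_{I_2}\Big)+\bz\,g,\qquad g:=\frac{\chi_{I_1}}{m_\lz(I_1)}-\frac{\chi_{I_2}}{m_\lz(I_2)}.
\]
The first two summands have zero $\dmz$-average, are supported in $I_1$ and $I_2$, and have $L^\fz$ norm $\le 2D_i$; hence they are $2D_1m_\lz(I_1)$ resp. $2D_2m_\lz(I_2)$ times $(1,\fz)_{\Delta_\lz}$-atoms, contributing $\ls D_1m_\lz(I_1)+D_2m_\lz(I_2)$ to $\|f\|_\hoz$. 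Since $|\bz|\le\min\{D_1m_\lz(I_1),D_2m_\lz(I_2)\}\le\tfrac12\big(D_1m_\lz(I_1)+D_2m_\lz(I_2)\big)$, the lemma reduces to the single estimate $\|g\|_\hoz\ls\log_2\frac{|x_1-x_2|}{r}$.

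To bound $\|g\|_\hoz$ I would bridge $I_1$ and $I_2$ by a chain of intervals of geometrically growing and then decreasing radii. Let $k$ be the least integer with $2^kr\ge|x_1-x_2|+r$; by (iii), $k\ls\log_2\frac{|x_1-x_2|}{r}$. Set $J_i:=I(x_1,2^ir)$ for $0\le i\le k$ and $J_{k+j}:=I(x_2,2^{\,k+1-j}r)$ for $1\le j\le k+1$, so $J_0=I_1$, $J_{2k+1}=I_2$, consecutive intervals are nested except for the single ``horizontal'' step $J_k=I(x_1,2^kr)\to J_{k+1}=I(x_2,2^kr)$, and, since $|x_1-x_2|\le 2^kr$, one has $J_k\cup J_{k+1}\subset I(x_1,2^{k+1}r)$. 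Telescoping,
\[
g=\sum_{i=0}^{2k}\Big(\frac{\chi_{J_i}}{m_\lz(J_i)}-\frac{\chi_{J_{i+1}}}{m_\lz(J_{i+1})}\Big).
\]
Using only the doubling of $dm_\lz$ furnished by \eqref{volume property-1} (whence $m_\lz(I(x,2s))\sim m_\lz(I(x,s))$ and $m_\lz(I(x,s))\sim m_\lz(I(y,s))$ whenever $|x-y|\le s$), each summand has zero $\dmz$-average, is supported in an interval $\wz J_i\supset J_i\cup J_{i+1}$ with $m_\lz(\wz J_i)\sim m_\lz(J_i)\sim m_\lz(J_{i+1})$, and has $L^\fz$ norm $\ls[m_\lz(\wz J_i)]^{-1}$; hence it is a bounded multiple of a $(1,\fz)_{\Delta_\lz}$-atom of $\hoz$-norm $\ls1$. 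Summing the $2k+1$ terms gives $\|g\|_\hoz\ls k\ls\log_2\frac{|x_1-x_2|}{r}$, and combining with the first two summands (and using $\log_2\frac{|x_1-x_2|}{r}\ge2$, by (iii), to absorb them) yields the claim.

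The step needing the most care is the construction of the bridging chain together with the uniformity of all constants: one must check that geometric growth of the radii produces only $O(\log_2(|x_1-x_2|/r))$ intervals, that the single center-jump step $J_k\to J_{k+1}$ is admissible, and that every doubling constant invoked depends only on $\lz$ through \eqref{volume property-1} — including near the endpoint $0$, where $I(x,s)=(x-s,x+s)\cap\rrp$ may be truncated, which is harmless because $(\rrp,\rho,dm_\lz)$ is a space of homogeneous type. Everything else is routine bookkeeping.
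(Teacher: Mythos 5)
Your proof is correct, and at its core it uses the same mechanism as the paper's: a telescoping decomposition over geometrically dilated intervals, producing $O(\log_2(|x_1-x_2|/r))$ multiples of $(1,\fz)_{\Delta_\lz}$-atoms. The organization, however, is genuinely different. The paper keeps the two pieces $f\chi_{I(x_1,r)}$ and $f\chi_{I(x_2,r)}$ intact and expands \emph{both} supports in parallel, subtracting at stage $j$ the average over $I(x_i,2^jr)$, until at the top scale $2^{J_0}r\sim|x_1-x_2|$ the two leftover characteristic functions are merged through the common interval $I(\tfrac{x_1+x_2}{2},2^{J_0+1}r)$; every one of the $2(J_0+1)$ coefficients is bounded by $D_i\,m_\lz(I(x_i,r))$. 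You instead first strip off the two mean-zero local parts (each an $O(1)$ multiple of $D_i\,m_\lz(I(x_i,r))$ times an atom) and isolate the entire logarithmic loss in the normalized dipole $g=\chi_{I_1}/m_\lz(I_1)-\chi_{I_2}/m_\lz(I_2)$, which you then bridge by a chain that dilates up from $x_1$, makes one admissible horizontal jump, and contracts down to $x_2$. Both routes rely only on the doubling furnished by \eqref{volume property-1} (including near the origin, as you note), so both generalize verbatim to spaces of homogeneous type, as the paper's Remark \ref{r Hardy atom} observes. Your version buys slightly cleaner bookkeeping and a marginally sharper conclusion, namely $\|f\|_\hoz\ls D_1m_\lz(I_1)+D_2m_\lz(I_2)+\min\{D_1m_\lz(I_1),D_2m_\lz(I_2)\}\log_2\frac{|x_1-x_2|}{r}$, with a minimum rather than a sum multiplying the logarithm; the paper's version has the advantage of producing the atomic decomposition of $f$ itself in one pass. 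All the points you flag as needing care (the count $2k+1\ls\log_2(|x_1-x_2|/r)$, the admissibility of the jump $J_k\to J_{k+1}$ using $|x_1-x_2|\le 2^kr$, and the uniformity of the doubling constants) do check out.
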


\begin{proof}
Assume that $f:=f_1+f_2$, where $\supp f_i\subset I(x_i,r) $ for $i=1,\,2$.
We will show that $f$ has the following $(1, \fz)_{\Delta_\lz}$-atomic decomposition
\begin{equation}\label{atomic decom}
f=\sum_{i=1}^2\sum_{j=1}^{J_0+1}\az^j_ia^j_i,
\end{equation}
where $J_0$ is the smallest integer larger than $\log_2\frac{|x_1-x_2|}r$, for each $j$, $a^j_i$ is
 a $(1, \fz)_{\Delta_\lz}$-atom and $\az^j_i$ a real number satisfying that
\begin{equation}\label{coefficient bdd}
\lf|\az^j_i\r|\ls D_im_\lz(I(x_i, r)).
\end{equation}

To this end, we write
$$f=\sum_{i=1}^2\lf[f_i-\wz\az^1_i\chi_{I(x_i,\,2r)}\r]+\sum_{i=1}^2\wz\az^1_i\chi_{I(x_i,\,2r)}
=:f^1_1+f^1_2+\sum_{i=1}^2\wz\az^1_i\chi_{I(x_i,\,2r)},$$
where
$$\wz\az^1_i:=\frac1{m_\lz(I(x_i,\,2r))}\int_{I(x_i,\,r)}f_i(x)\xtz.$$
Let
$$\az^1_i:=\lf\|f^1_i\r\|_\linz m_\lz(I(x_i,\,2r))$$
and $a^1_i:=f^1_i/\az^1_i$. Then
we see that $a^1_i$ is a $(1, \fz)_{\Delta_\lz}$-atom supported on $I(x_i, 2r)$ and
$\az^1_i$ satisfies \eqref{coefficient bdd}.

For $i=1,\,2$, we further write
\begin{eqnarray*}
\wz\az^1_i\chi_{I(x_i,\,2r)}
&=&\wz\az^1_i\chi_{I(x_i,\,2r)}-\wz\az^2_i\chi_{I(x_i,\,4r)}+\wz\az^2_i\chi_{I(x_i,\,4r)}
=:f^2_i+\wz\az^2_i\chi_{I(x_i,\,4r)},
\end{eqnarray*}
where $$\wz\az^2_i:=\frac1{m_\lz(I(x_i,\,4r))}\int_{I(x_i,\,r)}f_i(x)\xtz.$$
Let
$$\az^2_i:=\lf\|f^2_i\r\|_\linz m_\lz(I(x_i,\,4r))$$
 and $a^2_i:=f^2_i/\az^2_i$. Then
we see that $a^2_i$ is a $(1, \fz)_{\Delta_\lz}$-atom supported on $I(x_i, 4r)$ and
$$\lf|\az^2_i\r|\le\lf|\wz\az^1_i\r|m_\lz(I(x_i,\,4r))\le\frac{m_\lz(I(x_i,\,4r))}{m_\lz(I(x_i,\,2r))}
\|f_i\|_\linz m_\lz(I(x_i,\,r))\ls D_i m_\lz(I(x_i,\,r)).$$

Continuing in this fashion we see that for $j\in\{1,\,2,\,\ldots,J_0\}$,
$$f=\sum_{i=1}^2\lf[\sum_{j=1}^{J_0}f^j_i\r]+\sum_{i=1}^2\wz\az^{J_0}_i\chi_{I(x_i,\,2^{J_0}r)}
=\sum_{i=1}^2\lf[\sum_{j=1}^{J_0}\az^j_ia^j_i\r]+\sum_{i=1}^2\wz\az^{J_0}_i\chi_{I(x_i,\,2^{J_0}r)},$$
where for $j\in\{2,\, 3,\,\ldots,\,J_0\}$,
$$\wz\az^j_i:=\frac1{m_\lz(I(x_i,\,2^jr))}\int_{I(x_i,\,r)}f_i(x)\xtz,$$
$$f^j_i:=\wz \az^{j-1}_i\chi_{I(x_i,\,2^{j-1}r)}-\wz \az^j_i\chi_{I(x_i,\,2^jr)},$$
$$\az^j_i:=\lf\|f^j_i\r\|_\linz m_\lz(I(x_i, 2^jr))\,\,{\rm and}\,a^j_i:=f^j_i/\az^j_i.$$
Moreover, for each $i$ and $j$, $a^j_i$ is a a $(1, \fz)_{\Delta_\lz}$-atom and $|\az^j_i|\ls D_im_\lz(I(x_i, r))$.

For $\sum_{i=1}^2\wz\az^{J_0}_i\chi_{I(x_i,\,2^{J_0}r)}$, we set
\begin{eqnarray*}
\wz\az^{J_0}&:=&\frac1{m_\lz(I(\frac{x_1+x_2}2,\,2^{J_0+1}r))}\int_{I(x_1,\,r)}f_1(x)\xtz\\
&=&-\frac1{m_\lz(I(\frac{x_1+x_2}2,\,2^{J_0+1}r))}\int_{I(x_2,\,r)}f_2(x)\xtz.
\end{eqnarray*}
Then
\begin{eqnarray*}
&&\sum_{i=1}^2\wz\az^{J_0}_i\chi_{I(x_i,\,2^{J_0}r)}\\
&&\quad=\lf[\wz\az^{J_0}_1\chi_{I(x_1,\,2^{J_0}r)}-\wz\az^{J_0}\chi_{I(\frac{x_1+x_2}2,\,2^{J_0+1}r)}\r]
+\lf[\wz\az^{J_0}\chi_{I(\frac{x_1+x_2}2,\,2^{J_0+1}r)}+\wz\az^{J_0}_2\chi_{I(x_2,\,2^{J_0}r)}\r]\\
&&\quad=:\sum_{i=1}^2f^{J_0+1}_i.
\end{eqnarray*}
For $i=1,\,2$, let
$$\az^{J_0+1}_i:=\lf\|f^{J_0+1}_i\r\|_\linz m_\lz\lf(I\lf(\frac{x_1+x_2}2,\,2^{J_0+1}r\r)\r)\,\, {\rm and}\,\,
a^{J_0+1}_i:=f^{J_0+1}_i/\az^{J_0+1}_i.$$
Then we see that $a^{J_0+1}_i$ is a $(1, \fz)_{\Delta_\lz}$-atom and $\az^{J_0+1}_i$ satisfies \eqref{coefficient bdd}.
Thus, we have \eqref{atomic decom} holds, which implies that $f\in \hoz$ and
\begin{equation*}
\|f\|_\hoz\le \sum_{i=1}^2\sum_{j=1}^{J_0+1}\lf|\az_i^j\r|\ls \log\frac{|x_1-x_2|}r\sum_{i=1}^2D_im_\lz(I(x_i,r)).
\end{equation*}
This finishes the proof of Lemma \ref{l-atomic estimate}.
\end{proof}

\begin{rem}\label{r Hardy atom}
From the proof of the Lemma \ref{l-atomic estimate}, we see that this result holds for general Hardy space $H^1(X,d,\mu)$ on
the spaces of homogeneous type $(X,d,\mu)$ in the sense of Coifman and Weiss \cite{cw77}.
\end{rem}

%Let $\wriz$ be the adjoint operator of $\riz$ on $\ltz$ and
%\begin{equation}\label{def of pi}
%\Pi(g,h):=g\wriz h- h\riz g,
%\end{equation}
%where $p\in(1, \fz)$, $g\in\lpz$ and $h\in\lppz.$

Next we provide the following estimate of the bilinear operator $\Pi$, which is defined in \eqref{def of pi}.
\begin{prop}\label{t-H1 estimate of pi}
Let $p\in (1, \fz)$. There exists a positive constant $C$ such that for any $g\in\lpz$ and $h\in\lppz$,
$$\|\Pi(g,h)\|_\hoz\le C\|g\|_\lpz\|h\|_\lppz.$$
\end{prop}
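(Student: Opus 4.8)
The plan is to show $\Pi(g,h)\in\hoz$ with the stated bound by checking three things: that $\Pi(g,h)$ already lies in $\loz$ with norm $\ls\|g\|_\lpz\|h\|_\lppz$, that it has vanishing $\dmz$-integral, and that it pairs boundedly against every $b\in\bmoz$ with constant $C\|g\|_\lpz\|h\|_\lppz\|b\|_\bmoz$. Membership in $\hoz$ with the claimed estimate then follows from the $H^1$--$BMO$ duality on the space of homogeneous type $(\rrp,\rho,\dmz)$ of Coifman and Weiss \cite{cw77}.

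First I would dispose of the two elementary ingredients. By Proposition~\ref{t:RieszCZ}, $\riz$ is a Calder\'on--Zygmund operator on $(\rrp,\rho,\dmz)$, hence bounded on $\lrz$ for every $r\in(1,\fz)$, and so is its adjoint $\wriz$. H\"older's inequality then gives $\|g\,\riz h\|_\loz\le\|g\|_\lpz\,\|\riz h\|_\lppz\ls\|g\|_\lpz\|h\|_\lppz$ and, symmetrically, $\|h\,\wriz g\|_\loz\ls\|g\|_\lpz\|h\|_\lppz$, so $\Pi(g,h)\in\loz$ with $\|\Pi(g,h)\|_\loz\ls\|g\|_\lpz\|h\|_\lppz$. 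Since $\wriz$ is by definition the $\ltz$-adjoint of $\riz$, for $g,h$ in a dense subclass (say bounded with compact support) one has $\int_0^\fz(\riz h)\,g\,\dmz=\int_0^\fz h\,(\wriz g)\,\dmz$, hence $\int_0^\fz\Pi(g,h)\,\dmz=0$; by the continuity of $\Pi\colon\lpz\times\lppz\to\loz$ just established, this persists for all $g\in\lpz$, $h\in\lppz$.

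The core of the argument is a duality identity linking $\Pi$ to the commutator. Fix $b\in\bmoz$; by a density reduction it suffices to treat $b\in\linz$ with compact support, for which $bg\in\lpz$, $bh\in\lppz$, $\riz(bh)$, $\wriz(bg)$ are all well defined. Using the $\ltz$-adjoint relation $\int_0^\fz\riz(bh)\,g\,\dmz=\int_0^\fz(bh)\,(\wriz g)\,\dmz$, a direct rearrangement yields
\[
\int_0^\fz\Pi(g,h)(x)\,b(x)\,\dmz(x)=\int_0^\fz\big([b,\riz]h\big)(x)\,g(x)\,\dmz(x),
\]
since $\int_0^\fz b\,(\riz h)\,g\,\dmz-\int_0^\fz(bh)\,(\wriz g)\,\dmz=\int_0^\fz(g\,\riz h-h\,\wriz g)\,b\,\dmz$. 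Invoking part~(1) of Theorem~\ref{t-commutator character BMO} with exponent $p'$ and then H\"older's inequality,
\[
\Big|\int_0^\fz\Pi(g,h)\,b\,\dmz\Big|\le\big\|[b,\riz]h\big\|_\lppz\,\|g\|_\lpz\le C\,\|b\|_\bmoz\,\|h\|_\lppz\,\|g\|_\lpz .
\]

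Finally, $\Pi(g,h)$ is an $\loz$ function of mean zero whose pairing with each $b$ in a subclass of $\bmoz$ dense enough to recover its predual is controlled by $C\|g\|_\lpz\|h\|_\lppz\|b\|_\bmoz$, so the $H^1$--$BMO$ duality for $(\rrp,\rho,\dmz)$ places $\Pi(g,h)$ in $\hoz$ with $\|\Pi(g,h)\|_\hoz\le C\|g\|_\lpz\|h\|_\lppz$. The step I expect to require the most care is precisely this last one, the converse half of the $H^1$--$BMO$ duality: that an $\loz$ function with vanishing integral which pairs boundedly against $\bmoz$ must belong to $\hoz$. I would handle it by restricting the test functions to continuous compactly supported ones (which are dense in the predual) and appealing to the atomic $H^1$--$BMO$ duality, together with the fact that $\Pi(g,h)\in\loz$ identifies the resulting functional with $\Pi(g,h)$ itself; the density reductions needed to legitimize the manipulations with $\riz$, $\wriz$ and $[b,\riz]$ are routine given the $\lrz$-bounds noted above.
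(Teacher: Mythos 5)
Your proposal is correct and follows essentially the same route as the paper: establish $\Pi(g,h)\in\loz$ with vanishing $\dmz$-integral via the $L^r$-boundedness of $\riz$ and $\wriz$, rewrite the pairing against $b\in\bmoz$ as $\int_0^\fz g\,[b,\riz]h\,\dmz$, bound it by part (1) of Theorem \ref{t-commutator character BMO}, and conclude by $H^1$--$BMO$ duality. You are in fact more explicit than the paper about the density reductions and the converse half of the duality (recovering $\hoz$ membership from bounded pairing), which the paper leaves implicit.
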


\begin{proof}
Since $\riz$ and $\wriz$ are both bounded on $\lrz$ for any $r\in(1, \fz)$, we see that $\Pi(g,h)\in\loz$ for
any $g\in\lpz$ and $h\in\lppz$ and
$$\inzf\Pi(g, h)(x)x^{2\lz}\,dx=0.$$
Moreover, from (1) of Theorem \ref{t-commutator character BMO}, it follows that
for every $f\in \bmoz$, $g\in\lpz$ and $h\in\lppz$,
\begin{eqnarray*}
\lf|\inzf f(x)\Pi(g,h)(x)\xtz\r|&=&\lf|\inzf g(x)[f, \riz] h(x)\xtz\r|\\
&\ls&\|h\|_\lppz\|g\|_\lpz\|f\|_\bmoz.
\end{eqnarray*}
Therefore, the proof of Proposition \ref{t-H1 estimate of pi} is completed.
\end{proof}

%We also have the Fefferman--Stein decomposition of $\bmoz$ in the Bessel setting.
%\begin{prop}\label{p-fs decomposition}
%A function $b\in \bmoz$ if and only if there exist $b_0,\,b_1\in\linz$ such that
%$b=b_0+\wriz b_1$.
%\end{prop}
%
%\begin{proof}
%From \cite[Theorem 1.7]{bdt}, we deduce that $\riz$ is bounded from $\hoz$ to $\loz$.
%Also, $\wriz$ is bounded from
%$\linz$ to $\bmoz$ in \cite[p.358]{yy}. Then the conclusion of Proposition \ref{p-fs decomposition}
%follows from a standard argument.
%\end{proof}

The following proposition will lead to an iterative argument to prove the lower bound appearing in Theorem \ref{t-equiv character H1}.

\begin{prop}\label{t-H1 appro}
Let $p\in(1, \fz)$. For every $\ez>0$,
there exist positive constants $M$ and $C$ such that for every $(1,\fz)_{\Delta_\lz}$-atom $a$, there exist $g\in\lpz$ and $h\in\lppz$ satisfying that
\begin{equation*}\label{H1 appro}
\|a-\Pi(g,h)\|_\hoz<\ez
\end{equation*}
and $\|g\|_\lpz\|h\|_\lppz\le CM^{\frac{2\lz}p+1}.$
\end{prop}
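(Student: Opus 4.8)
The plan is to realize the atom $a$ as $\Pi(g,h)$ plus a small error by making a single judicious choice of $g$ and $h$, exploiting the lower bounds on the Riesz kernel $\riz(x,y)$ from Proposition \ref{p-estimate of riesz kernel} and Remark \ref{r lower bound}. Suppose $a$ is a $(1,\fz)_{\Delta_\lz}$-atom supported on an interval $I=I(x_0,r)$. The idea is to place a ``mass'' far away from $I$ so that on $I$ the kernel $\riz(x,y)$ is essentially constant in sign and size. Concretely, fix a large constant $K>K_1$ and choose a point $y_0$ with $y_0\sim K x_0$ (if $x_0$ is comparable to $r$) or, in the other regime, a point $y_0$ with $y_0\sim x_0/K$ lying to the left; set $h:=\chi_{I(y_0,r)}/m_\lz(I(y_0,r))$ normalized so that $\|h\|_\lppz\sim m_\lz(I(y_0,r))^{-1/p}$, and define $g$ so that $g\,\riz h$ reproduces $a$. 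Since for $x\in I$ and $y\in I(y_0,r)$ the kernel $\riz(x,y)$ is, by \eqref{lower bound of riesz kernel} (resp. \eqref{upper bound of riesz kernel}), comparable to a fixed nonzero constant $c_0=c_0(K,\lz,x_0,y_0)$ up to a controlled multiplicative error, we have $\riz h(x)= c_0(1+O(\text{small}))$ on $I$, so we may set $g:=a/\riz h$ on $I$ and $g:=0$ off $I$. Then $g\,\riz h = a$ exactly, and $\|g\|_\lpz\ls |c_0|^{-1}\|a\|_\lpz\ls |c_0|^{-1}m_\lz(I)^{-1/p'}$.

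The next step is to estimate the leftover term $a-\Pi(g,h)= a - g\,\riz h + h\,\wriz g = h\,\wriz g$, since $g\,\riz h=a$. Here $\wriz g(y)=\int_0^\fz \riz(x,y)g(x)\,dm_\lz(x)$, and because $g$ is supported in $I$ with $\int g\,dm_\lz = \int a/\riz h\,dm_\lz$ — which is small but not zero — I would actually want to subtract off a correction: either choose $g$ to have vanishing $m_\lz$-integral by adjusting with a bump (at the cost of a second, controllable $\Pi$ term), or observe directly that $\wriz g$ is small on $I(y_0,r)$ because $y_0$ is far from $\supp g=I$, using the size bound \eqref{cz kernel condition-1} and the Hölder regularity \eqref{cz kernel condition-2}. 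The cleanest route: arrange $\int_0^\fz g(x)\,dm_\lz(x)=0$ by writing $g = a/\riz h - (\text{mean})\cdot(\text{normalized bump on } I)$; the extra piece contributes another $\Pi$ term whose $\hoz$-norm is $O(\text{small})$ by Proposition \ref{t-H1 estimate of pi}. With $\int g\,dm_\lz=0$, the function $h\,\wriz g$ is supported on $I(y_0,r)$, has vanishing $m_\lz$-integral there, and by the regularity estimate satisfies $|\wriz g(y)|\ls \frac{r}{|x_0-y_0|}\,m_\lz(I(x_0,|x_0-y_0|))^{-1}\,\|g\|_\loz$ for $y\in I(y_0,r)$. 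Since $\|g\|_\loz\ls |c_0|^{-1}$ and $|c_0|\sim m_\lz(I(x_0,|x_0-y_0|))^{-1}$, this gives $\|h\wriz g\|_\hoz\ls \|h\wriz g\|_\loz \ls r/|x_0-y_0|$, which can be made $<\ez$ by taking $|x_0-y_0|$ (equivalently $K$, hence $M$) large enough.

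Finally I would track the size constraint. With $c_0\sim m_\lz(I(x_0,|x_0-y_0|))^{-1}$ and $|x_0-y_0|\sim M r$ (in the regime $x_0\ls r$) or $\sim M x_0$ (in the regime $x_0\gs r$), one computes $\|g\|_\lpz\|h\|_\lppz \ls |c_0|^{-1} m_\lz(I(x_0,r))^{-1/p'}\,m_\lz(I(y_0,r))^{-1/p}\sim m_\lz(I(x_0,Mr))\,m_\lz(I(x_0,r))^{-1/p'}\,m_\lz(I(x_0,Mr))^{-1/p}$, and using \eqref{volume property-1} to compare these measures one sees this is $\ls M^{2\lz/p+1}$ after the dust settles, uniformly in the atom $a$. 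The main obstacle I anticipate is the bookkeeping across the two geometric regimes (small interval near the origin versus interval comparable to or far from the origin): the kernel lower bounds in Proposition \ref{p-estimate of riesz kernel} come in distinct forms \eqref{lower bound of riesz kernel} and \eqref{upper bound of riesz kernel} according to whether the ``far point'' is to the right or the left, and one must choose the placement of $y_0$ and the scaling exponent in $M^{2\lz/p+1}$ so that a single clean statement covers both cases. Controlling the mean-value correction term so that it genuinely sits below $\ez$ and does not spoil the size bound is the other delicate point, handled by Proposition \ref{t-H1 estimate of pi} together with the smallness of $r/|x_0-y_0|$.
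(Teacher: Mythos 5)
Your overall architecture is the same as the paper's (place an auxiliary bump at a point $y_0$ away from the atom, use a kernel lower bound to make $\riz h$ or $\wriz g$ essentially constant and nonzero on the relevant interval, and control the leftover via the H\"older regularity of the kernel together with the cancellation of $a$), but there is a genuine gap in the regime $x_0\gg r$ that breaks the uniform size bound. If you put $y_0\sim x_0/K$ to the left, then for $x\in I(x_0,r)$ the only available lower bound is \eqref{upper bound of riesz kernel}, giving $|\riz(x,y_0)|\sim x_0^{-(2\lz+1)}$, i.e.\ $|c_0|\sim m_\lz(I(x_0,x_0))^{-1}$, whereas the bump $h$ is $L^1$-normalized on the much smaller interval $I(y_0,r)$. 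Carrying out your own final computation in this regime gives
$\|g\|_\lpz\|h\|_\lppz \ls |c_0|^{-1}\, m_\lz(I(x_0,r))^{-1/p'}\, m_\lz(I(y_0,r))^{-1/p} \sim K^{2\lz/p}\, x_0/r$,
which is unbounded over atoms with $x_0/r\to\fz$; no choice of the exponent in $M^{2\lz/p+1}$ repairs this, because the blow-up is in the atom, not in $M$. The paper avoids this by treating the case $x_0>2Mr$ with $y_0:=x_0-Mr/K_0$, i.e.\ at distance $\sim Mr$ (not $\sim x_0$) from $x_0$, and invoking the \emph{near-diagonal} lower bound $\riz(x,y)\gs \frac1{x^\lz y^\lz}\frac1{y-x}$ of Remark \ref{r lower bound} (coming from part (iii) of Proposition \ref{p-estimate of riesz kernel}), which yields $|\wriz g(x_0)|\gs 1/M$ and $m_\lz(I(y_0,r))\sim m_\lz(I(x_0,r))$, hence a product of norms $\ls M$. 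Your proposal never uses part (iii) of Proposition \ref{p-estimate of riesz kernel}, and without it the far-from-origin regime cannot be closed.

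Two smaller points. First, the mean-value correction to $g$ you worry about is unnecessary: $\int_0^\fz h\,\wriz g\,dm_\lz=\int_0^\fz g\,\riz h\,dm_\lz=\int_0^\fz a\,dm_\lz=0$ automatically, so the error term is a bounded, compactly supported, mean-zero function and hence a multiple of an atom; but the inequality $\|h\wriz g\|_\hoz\ls\|h\wriz g\|_\loz$ as written is false in general and should be replaced by $\|h\wriz g\|_\hoz\ls\|h\wriz g\|_\linz m_\lz(I(y_0,r))$. Second, note that the paper makes the reverse assignment ($g$ the far bump, $h$ a multiple of $a$, with $a-\Pi(g,h)$ consisting of two bumps on separated intervals), which is why it needs Lemma \ref{l-atomic estimate} and picks up the harmless factor $\log_2 M$; your exact-inversion variant $g:=a/\riz h$ avoids that lemma, which is a legitimate simplification of the error estimate, but it does not touch the main defect above.
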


\begin{proof}
Assume that $a$ is a $(1,\fz)_{\Delta_\lz}$-atom with $\supp a \subset I(x_0, r)$. Observe that if $r>x_0$, then
$I(x_0, r)=(x_0-r,x_0+r)\cap \mathbb{R}_+=I(\frac{x_0+r}2, \frac{x_0+r}2)$. Therefore, without loss of generality, we may assume
that $r\le x_0$. Let $K_2$ and $\tilde{K}_3$ be the constants appeared in  (ii) of Proposition \ref{p-estimate of riesz kernel} and Remark \ref{r lower bound} respectively, and
$K_0>\max\{{1\over K_2},\,\frac1{\tilde{K}_3}\}+1$ large enough. For any $\ez>0$, let $M$ be a positive constant large enough such that $M\ge100K_0$
and $\frac{\log_2 M}{M}<\ez$.

We now consider the following two cases.

Case (a): $x_0\le 2Mr$. In this case, let $y_0:=x_0+2MK_0r$. Then
$$(1+K_0)x_0\le y_0\le (1+2MK_0)x_0.$$
Define
$$g(x):=\chi_{I(y_0,\,r)}(x)\,\,{\rm and}\,\,h(x):=-\frac{a(x)}{\wriz g(x_0)}.$$

\noindent By the fact that $y/x_0>K_2^{-1}$ for any $y\in I(y_0,r)$ and Proposition \ref{p-estimate of riesz kernel} \textit{ii)}, we see that
\begin{equation}\label{lower bound of riesz}
\lf|\wriz g(x_0)\r|=\Big|\int_{y_0-r}^{y_0+r}\riz(y, x_0)y^{2\lz}dy\Big|\gs\int_{y_0-r}^{y_0+r}\frac1ydy\sim\frac ry_0\sim\frac1M.
\end{equation}

\noindent Moreover, from the definitions of $g$ and $h$,  it follows that
\begin{eqnarray*}
\|g\|_\lpz\|h\|_\lppz&\le& \frac1{|\wriz g(x_0)|}\lf[\myzr\r]^{\frac1p}\lf[\mxzr\r]^{-\frac1p}\\
&\ls&M\lf(y_0^{2\lz}r\r)^{1/p}\lf(x^{2\lz}_0r\r)^{-1/p}\ls M^{\frac{2\lz}p+1}.
\end{eqnarray*}

By the definition of the operator $\Pi$ as in \eqref{def of pi}, we write
\begin{eqnarray*}
a(x)-\Pi(g,h)(x)&=&a(x)\frac{\wriz g(x_0)-\wriz g(x)}{\wriz g(x_0)}-g(x)\riz h(x)=: W_1(x)+W_2(x).
\end{eqnarray*}
Then it is obvious that $\supp W_1\subset I(x_0, r)$ and $\supp W_2\subset I(y_0, r)$. Moreover, let
$$C_1:=\frac1\mxzr\frac\myzr\mxzxy\,\, {\rm and}\,\,C_2:=\frac1\mxzxy.$$
From the cancellation property $\inzf a(y)y^{2\lz}\,dy=0$, the H\"older's regularity of the Riesz kernel $\riz(x, y)$ in \eqref{cz kernel condition-2}, and the fact that $|y-x|\sim|x_0-y_0|$ for $y\in I(x_0, r)$ and $x\in I(y_0, r)$, we have
\begin{eqnarray*}
|W_2(x)|&=&\chi_{I(y_0,\, r)}(x)\lf|\riz h(x)\r|\\
&\ls&M\chi_{I(y_0,\, r)}(x)\lf|\int_{I(x_0,\,r)}\lf[\riz(x,y)-\riz(x,x_0)\r]a(y)y^{2\lz}\,dy\r|\\
&\ls&M\chi_{I(y_0,\, r)}(x)\frac{r}{|x_0-y_0|}\frac1{m_\lz(I(y_0,\, |x_0-y_0|))}\\
&\ls& C_2\chi_{I(y_0,\, r)}(x).
\end{eqnarray*}
On the other hand, using \eqref{cz kernel condition-2} and $|y-x_0|\sim|x_0-y_0|$ for $y\in I(y_0, r)$,
\begin{eqnarray*}
|W_1(x)|&\le&M\chi_{I(x_0,\, r)}(x)\|a\|_\linz\int_{I(y_0,\,r)}\frac{r}{|x_0-y|}\frac{1}{m_\lz(I(x_0,\,|x_0-y|))}y^{2\lz}\,dy\\
&\ls&M\chi_{I(x_0,\, r)}(x)\frac1\mxzr\frac r{|x_0-y_0|}\frac\myzr\mxzxy\ls C_1\chi_{I(x_0,\, r)}(x).
\end{eqnarray*}

Moreover, note that
$$\inzf[a(x)-\Pi(g, h)(x)]\,\xtz=0.$$

Hence, the function $f(x)= a(x)-\Pi(g, h)(x)$ satisfies all conditions in Lemma \ref{l-atomic estimate}.
Now from Lemma \ref{l-atomic estimate}, we have that
\begin{eqnarray}\label{appro estimate of atom}
\|a-\Pi(g,h)\|_\hoz&\ls&\log_2\Big(\frac{|x_0-y_0|}r\Big)\lf[C_1\mxzr+C_2\myzr\r]\nonumber\\
&\ls&\log_2\Big(\frac{|x_0-y_0|}r\Big)\,\frac r{|x_0-y_0|}\nonumber\\
&\ls& {\log_2 M \over M}<\ez.
\end{eqnarray}

Case (b): $x_0>2Mr$. In this case, let $y_0:=x_0-Mr/K_0$. Then ${2K_0-1\over 2K_0}x_0<y_0<x_0$. Let $g$ and $h$ be as in Case (a).
For every $y\in I(y_0, r)$, from the facts that $K_0>\max\{{1\over K_2},\,\frac1{\tilde{K}_3}\}+1$ and $M\ge100K_0$, we have
$$0<{x_0\over y} -1<\tilde{K}_3.$$
By Remark \ref{r lower bound}
and $y\sim y_0\sim x_0$ for any $y\in I(y_0, r)$,
we conclude that
\begin{eqnarray}\label{lower bound of riesz-2}
\lf|\wriz g(x_0)\r|\gs \Big|\int_{y_0-r}^{y_0+r}\frac1{x_0^\lz y_0^\lz}\frac1{x_0-y}y^{2\lz}\,dy\Big|
\sim\int_{y_0-r}^{y_0+r}\frac1{x_0-y_0}\,dy\sim\frac1M.
\end{eqnarray}
Moreover,
$$\|g\|_\lpz\|h\|_\lppz\ls M\lf[\frac{\myzr}{\mxzr}\r]^{1/p}\sim M.$$

Let $W_1$,$W_2$, $C_1$ and $C_2$ be as in Case (i). Then similarly, we have that
\begin{eqnarray*}
|W_2(x)|&\ls&M\chi_{I(y_0,\, r)}(x)\lf|\int_{I(x_0,\,r)}[\riz(x,y)-\riz(x, x_0)]a(y)y^{2\lz}\,dy\r|\\
&\ls&M\chi_{I(y_0,\, r)}(x)\frac{r}{|x_0-y_0|}\frac1{m_\lz(I(y_0,\, |x_0-y_0|))}\sim C_2\chi_{I(y_0,\, r)}(x),
\end{eqnarray*}
and
\begin{eqnarray*}
|W_1(x)|&\le&M\chi_{I(x_0,\, r)}(x)\|a\|_\linz\int_{I(y_0,\,r)}\frac{r}{|x_0-y|}\frac{1}{m_\lz(I(x_0,\,|x_0-y|))}y^{2\lz}\,dy\\
&\ls&M\chi_{I(x_0,\, r)}(x)\frac1\mxzr\frac r{|x_0-y_0|}\frac\myzr\mxzxy\ls C_1\chi_{I(x_0,\, r)}(x).
\end{eqnarray*}
Then \eqref{appro estimate of atom} follows from Lemma \ref{l-atomic estimate} in this case, which
together with Case (a) completes the proof of Proposition \ref{t-H1 appro}.
\end{proof}

We now use the above proposition in an iterative fashion to deduce the first main result Theorem \ref{t-equiv character H1}.

%\begin{thm}\label{t-equiv character H1}
%Let $p\in(1, \fz)$. For any $f\in\hoz$, there exist numbers $\{\az^k_j\}_{k,\,j}$, functions $\{g^k_j\}_{k,\,j}\subset\lpz$ and $\{h^k_j\}_{k,\,j}\subset\lppz$ such that
%\begin{equation}\label{represent of H1}
%f=\sum_{k=1}^\fz\sum_{j=1}^\fz \az^k_j\,\Pi\lf(g^k_j,h^k_j\r)
%\end{equation}
%in $\hoz$. Moreover, there exists a positive constant $C$ independent of $f$ such that
%\begin{eqnarray*}
%C^{-1}\|f\|_\hoz&\le&\inf\lf\{\sum_{k=1}^\fz\sum_{j=1}^\fz\lf|\az^k_j\,\r|\lf\|g^k_j\r\|_\lpz\lf\|h^k_j\r\|_\lppz:\r.\\
%&\quad&\lf.\quad \quad f=\sum_{k=1}^\fz\sum_{j=1}^\fz \az^k_j\,\Pi\lf(g^k_j,h^k_j\r)\r\}\le C\|f\|_\hoz.
%\end{eqnarray*}
%\end{thm}

\begin{proof}[Proof of Theorem \ref{t-equiv character H1}]
By Proposition \ref{t-H1 estimate of pi}, we have that for any $g\in\lpz$ and $h\in\lppz$,
$$\|\Pi(g, h)\|_\hoz\ls \|g\|_\lpz\|h\|_\lppz.$$
From this, for any $f\in\hoz$  having the representation \eqref{represent of H1} with
$$\sum_{k=1}^\fz\sum_{j=1}^\fz\lf|\az^k_j\r|\lf\|g^k_j\r\|_\lpz\lf\|h^k_j\r\|_\lppz<\fz,$$
 it follows that
\begin{eqnarray*}
\|f\|_\hoz&\ls&\inf\lf\{\sum_{k=1}^\fz\sum_{j=1}^\fz\lf|\az^k_j\r|\lf\|g^k_j\r\|_\lpz\lf\|h^k_j\r\|_\lppz:\r.\\
&\quad&\lf.\quad \quad f=\sum_{k=1}^\fz\sum_{j=1}^\fz \az^k_j\,\Pi\lf(g^k_j,h^k_j\r)\r\}.
\end{eqnarray*}

To see the converse, let $f\in\hoz$. We will show that $f$ has a representation as in \eqref{represent of H1} with
\begin{equation}\label{lower bound of H1 respent}
\inf\lf\{\sum_{k=1}^\fz\sum_{j=1}^\fz\lf|\az^k_j\r|\lf\|g^k_j\r\|_\lpz\lf\|h^k_j\r\|_\lppz\r\}\ls \|f\|_\hoz.
\end{equation}
To this end, assume that $f$ has the following atomic representation
 $f=\sum_{j=1}^\fz\az^1_ja^1_j$ with $\sum_{j=1}^\fz|\az^1_j|\le C_3\|f\|_\hoz$
 for certain constant $C_3\in(1, \fz)$.
We show that for any $\epsilon\in(0, 1/C_3)$ and
any $K\in\nn$, $f$ has the following representation
\begin{equation}\label{itration}
f=\sum_{k=1}^K\sum_{j=1}^\fz\az^k_j\,\Pi\lf(g^k_j, h^k_j\r)+E_K,
\end{equation}
where, $M$ is as in Proposition \ref{t-H1 appro}, $g^k_j\in\lpz$, $h^k_j\in\lppz$ for each $k$ and $j$,
$\{\az^k_j\}_{j}\in \ell^1$ for each $k$ and $E_K\in \hoz$ satisfying that
\begin{equation}\label{itration-1}
\lf\|g^k_j\r\|_\lpz\lf\|h^k_j\r\|_\lppz\ls M^{\frac{2\lz}p+1},
\end{equation}
\begin{equation}\label{itration-2}
\sum_{j=1}^\fz\lf|\az^k_j\r|\le  \ez^{k-1}C_3^k\|f\|_\hoz
\end{equation}
and
\begin{equation}\label{itration-3}
\|E_K\|_\hoz\le (\ez C_3)^K\|f\|_\hoz.
\end{equation}

 In fact, for given $\ez$ and $a_j$, by Proposition \ref{t-H1 appro}, there exist
 $g^1_j\in\lpz$ and $h^1_j\in\lppz$ with
 $$\lf\|g^1_j\r\|_\lpz\lf\|h^1_j\r\|_\lppz\ls M^{\frac{2\lz}p+1}$$
  and
 $$\lf\|a^1_j-\Pi\lf(g^1_j,h^1_j\r)\r\|_\hoz<\ez.$$
Now we write
$$f=\sum_{j=1}^\fz\az^1_ja^1_j=\sum_{j=1}^\fz\az^1_j\Pi\lf(g^1_j,h^1_j\r)+\sum_{j=1}^\fz\az^1_j\lf[a^1_j-\Pi\lf(g^1_j,h^1_j\r)\r]
=:M_1+E_1.$$
Observe that
\begin{equation*}
\|E_1\|_\hoz\le \sum_{j=1}^\fz\lf|\az^1_j\r|\lf\|a^1_j-\Pi\lf(g^1_j,h^1_j\r)\r\|_\hoz\le \ez C_3\|f\|_\hoz.
\end{equation*}

Since $E_1\in\hoz$, for the given $C_3$, there exist a sequence of atoms $\{a^2_j\}_j$ and numbers $\{\az^2_j\}_j$
such that $E_2=\sum_{j=1}^\fz\az^2_ja^2_j$ and
\begin{equation*}
\sum_{j=1}^\fz\lf|\az^2_j\r|\le C_3\|E_1\|_\hoz\le \ez C_3^2\|f\|_\hoz.
\end{equation*}

Another application of Proposition \ref{t-H1 appro} implies that there exist functions $g^2_j\in\lpz$ and $h^2_j\in\lppz$ with
\begin{equation*}
\lf\|g^2_j\r\|_\lpz\lf\|h^2_j\r\|_\lppz\ls M^{\frac{2\lz}p+1}\,\,{\rm and}\,\, \lf\|a^2_j-\Pi\lf(g^2_j, h^2_j\r)\r\|_\hoz<\ez.
\end{equation*}
Thus, we have
$$E_1=\sum_{j=1}^\fz\az^2_ja^2_j=\sum_{j=1}^\fz\az^2_j\Pi\lf(g^2_j,h^2_j\r)+\sum_{j=1}^\fz\az^2_j\lf[a^2_j-\Pi\lf(g^2_j,h^2_j\r)\r]
=:M_2+E_2.$$
Moreover,
\begin{eqnarray*}
\|E_2\|_\hoz&\le& \sum_{j=1}^\fz\lf|\az^2_j\r|\lf\|a^2_j-\Pi\lf(g^2_j,h^2_j\r)\r\|_\hoz
\le\ez\sum_{j=1}^\fz\lf|\az^2_j\r|\le (\ez C_3)^2\|f\|_\hoz.
\end{eqnarray*}
Now we conclude that
\begin{eqnarray*}
f=\sum_{j=1}^\fz\az^1_ja^1_j=\sum_{k=1}^2\sum_{j=1}^\fz\az^k_j\Pi\lf(g^k_j, h^k_j\r)+E_2,
\end{eqnarray*}

Continuing in this way, we deduce that for any $K\in\nn$, $f$ has the representation \eqref{itration} satisfying
\eqref{itration-1}, \eqref{itration-2} and \eqref{itration-3}. Thus letting $K\to\fz$, we
see that \eqref{represent of H1} holds. Moreover, since $\ez C_3<1$, we have that
$$\sum_{k=1}^\fz\sum_{j=1}^\fz \lf|\az^k_j\r|\le \sum_{k=1}^\fz\ez^{-1}(\ez C_3)^k\|f\|_\hoz\ls \|f\|_\hoz,$$
which implies \eqref{lower bound of H1 respent} and hence, completes the proof of Theorem \ref{t-equiv character H1}.
\end{proof}

Next we turn to the proof of (2) of Theorem \ref{t-commutator character BMO}. %We break it up into two parts.

\begin{proof}[Proof of (2) of Theorem \ref{t-commutator character BMO}]

Assume that $[b,\riz]$ is bounded on $\lppz$ for a given $p'\in (1, \fz)$ and
$$f\in(\hoz\cap L^\fz_c(\rr_+,dm_\lz)),$$
where $L^\fz_c(\rr_+, dm_\lz)$ is the subspace of $\linz$ consisting
of functions with compact supports in $\rr_+$. From Theorem \ref{t-equiv character H1},
we deduce that
\begin{eqnarray*}
\langle b, f\rangle&=&\sum_{k=1}^\fz\sum_{j=1}^\fz \az^k_j\lf\langle b, \Pi\lf(g^k_j,h^k_j\r)\r\rangle
=\sum_{k=1}^\fz\sum_{j=1}^\fz \az^k_j\lf\langle g^k_j,[b,\riz]h^k_j\r\rangle.
\end{eqnarray*}
This implies that
\begin{eqnarray*}
\lf|\langle b, f\rangle\r|
&&\le\sum_{k=1}^\fz\sum_{j=1}^\fz \lf|\az^k_j\r|\lf\|g^k_j\r\|_\lpz\lf\|[b,\riz]h^k_j\r\|_\lppz\\
&&\le\lf\|[b, \riz]\r\|_{\lppz\to\lppz}\sum_{k=1}^\fz\sum_{j=1}^\fz \lf|\az^k_j\r|\lf\|g^k_j\r\|_\lpz\lf\|h^k_j\r\|_\lppz\\
& &\ls\lf\|[b, \riz]\r\|_{\lppz\to\lppz}\|f\|_\hoz.
\end{eqnarray*}
Then by the fact that $\hoz\cap L^\fz_c(\rr_+,dm_\lz)$ is dense in $\hoz$ and
the duality between $\hoz$ and $\bmoz$, we finish the proof of Theorem \ref{t-commutator character BMO}.
\end{proof}

\medskip

%\section{Hardy space factorization and BMO  space characterization in the setting of~$S_{\lambda}$}
\section{Proof of Theorems \ref{t-commutator not character BMO} and Corollary \ref{c weak factorization}}
\label{s:MainResult 2}

In this section, we give the proofs of Theorem \ref{t-commutator not character BMO} and Corollary \ref{c weak factorization}.

\medskip

\begin{proof}[\bf Proof of Theorem \ref{t-commutator not character BMO}]

From (i) and (ii) in Proposition \ref{p upper}, we get that  $R_{S_\lambda}$ falls into the scope of classical Calder\'on--Zygmund operators (see for example \cite{s93}). Hence, by the
result of Coifman et al. \cite{crw}, we have that:  For $1<p<\infty,$  if $b$ lies in the classical BMO space BMO$(\mathbb{R}_+,dx)$ in the sense of John--Nirenberg, then the commutator  $[b,R_{S_\lambda}]$ is bounded on $L^p(\mathbb{R}_+,dx)$ with the operator norm
\begin{eqnarray}\label{counter}
\left\|[b, R_{S_\lambda}] \right\|_{L^p(\mathbb{R}_+,\,dx) \to L^p(\mathbb{R}_+,\,dx)} \le C\|b\|_{{\rm BMO}(\mathbb{R}_+,\,dx)}.
\end{eqnarray}

However, the BMO space associated with the Bessel operator $S_\lambda$ is the BMO$_o(\mathbb{R}_+,dx)$ as defined in \eqref{BMOo}, which is the dual of
the Hardy space $H^1_{S_\lambda}(\mathbb{R}_+,dx)$ associated with $S_\lambda$. As indicated in \cite{DDSY},
$$   {\rm BMO}_o(\mathbb{R}_+,dx) \subsetneqq  {\rm BMO}(\mathbb{R}_+,dx).  $$

Hence, we now choose
$$ b_0(x)=\log(x),\quad x>0. $$
Then it is obvious that this function $b_0 \in {\rm BMO}(\mathbb{R}_+,dx)$ but  $b_0 \not\in {\rm BMO}_o(\mathbb{R}_+,dx) $. Hence,
\begin{eqnarray*}
\left\|[b_0, R_{S_\lambda}] \right\|_{L^p(\mathbb{R}_+,\,dx) \to L^p(\mathbb{R}_+,\,dx)} \le C\|b_0\|_{{\rm BMO}(\mathbb{R}_+,\,dx)}
\end{eqnarray*}
but
$$ \|b_0\|_{{\rm BMO}_o(\mathbb{R}_+,\,dx)}=\infty. $$

\end{proof}

\medskip

\begin{proof}[\bf Proof of Corollary \ref{c weak factorization}]

%Suppose $b\in {\rm BMO}_o(\mathbb{R}_+,dx)$. Then, from the inequality \eqref{counter}
%and the fact that $$ {\rm BMO}_o(\mathbb{R}_+,dx) \subsetneqq  {\rm BMO}(\mathbb{R}_+,dx),  $$
%we have
%\begin{eqnarray}\label{counter 1}
%\left\|[b, R_{S_\lambda}] \right\|_{L^p(\mathbb{R}_+,\,dx) \to L^p(\mathbb{R}_+,\,dx)} \le C\|b\|_{{\rm BMO}(\mathbb{R}_+,\,dx)}
%\le C\|b\|_{{\rm BMO}_o(\mathbb{R}_+,dx)}.
%\end{eqnarray}

%Suppose $b\in \cup_{q>1} L^q_{loc}(\mathbb{R}_+,dx)$, and suppose that $[b,R_{S_\lambda}]$ is bounded on $L^p(\mathbb{R}_+,dx)$ for some $p\in(0,\infty)$.

Suppose $H^1_{S_\lambda}(\mathbb{R}_+,dx)$ has a weak factorization in the following form: for certain $p\in(1, \fz)$ and $f\in H^1_{S_\lambda}(\mathbb{R}_+,dx)$, there exist numbers $\{\az^k_j\}_{k,\,j}$, functions $\{g^k_j\}_{k,\,j}\subset L^p(\mathbb{R}_+,dx)$ and $\{h^k_j\}_{k,\,j}\subset L^{p'}(\mathbb{R}_+,dx)$ such that

\begin{equation*}
f=\sum_{k=1}^\fz\sum_{j=1}^\fz \az^k_j\,\Pi_{S_\lambda}\lf(g^k_j,h^k_j\r)
\end{equation*}
in $H^1_{S_\lambda}(\mathbb{R}_+,dx)$, where the operator $\Pi_{S_\lambda}$ is defined
as follows: for $g\in L^p(\mathbb{R}_+,dx)$ and $h\in L^{p'}(\mathbb{R}_+,dx)$,
\begin{equation}\label{def of pi 1}
\Pi_{{S_\lambda}}(g,h):=gR_{S_\lambda} h- h \widetilde{R_{S_\lambda}} g.
\end{equation}

We let $b_0(x)=\log(x),\  x>0$.
Then for the index $p$ above, from the inequality \eqref{counter}
we have
\begin{eqnarray*}
\left\|[b_0, R_{S_\lambda}] \right\|_{L^p(\mathbb{R}_+,\,dx) \to L^p(\mathbb{R}_+,\,dx)} \le C\|b_0\|_{{\rm BMO}(\mathbb{R}_+,\,dx)}<\infty.
\end{eqnarray*}

Now following the same proof of (2) in Theorem \ref{t-commutator character BMO} and the duality of $H^1_{S_\lambda}(\mathbb{R}_+,dx)$ with ${\rm BMO}_o(\mathbb{R}_+,dx)$, we obtain directly that
\begin{eqnarray*}
\|b_0\|_{{\rm BMO}_o(\mathbb{R}_+,\,dx)}&\le& C\left\|[b_0, R_{S_\lambda}] \right\|_{L^p(\mathbb{R}_+,\,dx)\to L^p(\mathbb{R}_+,\,dx)},
\end{eqnarray*}
which contradicts with
the fact that $$ \|b_0\|_{{\rm BMO}_o(\mathbb{R}_+,\,dx)}=\infty. $$

\end{proof}

\bigskip
\bigskip

{\bf Acknowledgments:}  The third author would like to thank Professor Richard Rochberg for a helpful discussion.
X. T. Duong is supported by ARC DP 140100649.
B. D. Wick's research supported in part by National Science Foundation
DMS grants \#0955432 and \#1500509.
D. Yang is supported by the NNSF of China (Grant No. 11571289) and the State Scholarship Fund of China (No. 201406315078)

\bigskip

%\medskip

%Xuan Thinh Duong

\smallskip

Department of Mathematics, Macquarie University, NSW, 2109, Australia.

\smallskip

{\it E-mail}: \texttt{xuan.duong@mq.edu.au}

\vspace{0.3cm}

%Ji Li

%\smallskip

Department of Mathematics, Macquarie University, NSW, 2109, Australia.

\smallskip

{\it E-mail}: \texttt{ji.li@mq.edu.au}

\vspace{0.3cm}

%Brett D. Wick

%\smallskip

Department of Mathematics, Washington University--St. Louis, St. Louis, MO 63130-4899 USA

\smallskip

{\it E-mail}: \texttt{wick@math.wustl.edu}

\vspace{0.3cm}

%Dongyong Yang

%\smallskip

School of Mathematical Sciences, Xiamen University, Xiamen 361005,  China

\smallskip

{\it E-mail}: \texttt{dyyang@xmu.edu.cn }
\end{document}